\begin{document}

\newtheorem{theorem}{Theorem}[section]
\newtheorem{proposition}{Proposition}[section]
\newtheorem{definition}{Definition}[section] 
\newtheorem{corollary}{Corollary}[section]
\newtheorem{lemma}{Lemma}[section]
\newtheorem{assumption}{Assumption}[section]
\newtheorem{remark}{Remark}[section]
\newtheorem{nota}{Notation}[section]
\numberwithin{equation}{section}

\newcommand{\stk}[2]{\stackrel{#1}{#2}}
\newcommand{\dwn}[1]{{\scriptstyle #1}\downarrow}
\newcommand{\upa}[1]{{\scriptstyle #1}\uparrow}
\newcommand{\nea}[1]{{\scriptstyle #1}\nearrow}
\newcommand{\sea}[1]{\searrow {\scriptstyle #1}}
\newcommand{\csti}[3]{(#1+1) (#2)^{1/ (#1+1)} (#1)^{- #1
 / (#1+1)} (#3)^{ #1 / (#1 +1)}}
\newcommand{\RR}[1]{\mathbb{#1}}

\newcommand{\rd}{{\mathbb R^d}}
\newcommand{\ep}{\varepsilon}
\newcommand{\rr}{{\mathbb R}}
\newcommand{\alert}[1]{\fbox{#1}}
\newcommand{\eqd}{\sim}
\def\p{\partial}
\def\R{{\mathbb R}}
\def\N{{\mathbb N}}
\def\Q{{\mathbb Q}}
\def\C{{\mathbb C}}
\def\l{{\langle}}
\def\r{\rangle}
\def\t{\tau}
\def\k{\kappa}
\def\a{\alpha}
\def\la{\lambda}
\def\De{\Delta}
\def\de{\delta}
\def\ga{\gamma}
\def\Ga{\Gamma}
\def\ep{\varepsilon}
\def\eps{\varepsilon}
\def\si{\sigma}
\def\Re {{\rm Re}\,}
\def\Im {{\rm Im}\,}
\def\E{{\mathbb E}}
\def\P{{\mathbb P}}
\def\Z{{\mathbb Z}}
\def\D{{\mathbb D}}
\newcommand{\ceil}[1]{\lceil{#1}\rceil}

\title{Persistence and time periodic positive solutions of doubly nonlocal Fisher-KPP equations in time periodic and space heterogeneous media}
\author{Jianping Gao\\
College of Mathematics and Econometrics\\
 Hunan University\\
  Changsha, Hunan 410082\\
People's Republic of China\\
 and\\
 Department of Mathematics and Statistics\\
 Auburn University,  AL 36849\\
 \\
Shangjiang Guo\\
College of Mathematics and Econometrics\\
 Hunan University\\
  Changsha, Hunan 410082\\
People's Republic of China\\
\\
and\\
\\
Wenxian Shen\\
Department of Mathematics and Statistics\\
Auburn University, AL 36849}
\date{}

\maketitle
\noindent{\bf Abstract.}
In this paper, we investigate the asymptotic dynamics of  Fisher-KPP equations with nonlocal dispersal operator and nonlocal reaction term in time periodic and space heterogeneous media. We first show the global existence and  boundedness of nonnegative solutions, and then obtain some sufficient conditions ensuring the uniform persistence. In particular, we study the existence, uniqueness and global stability of positive time periodic solutions under several different conditions.
\medskip

\noindent{\bf Key words.}
 Fisher-KPP equation, nonlocal diffusion, nonlocal reaction, uniform persistence, positive time periodic solution, time-space dependence.

\medskip
\noindent {\bf 2010 Mathematics Subject Classification.} { 35K57, 47G20, 47J35, 58D25, 92D25.}

\section{Introduction}

In this paper, we consider the following three nonlocal dispersal equations with nonlocal intra-specific competition in  time periodic and space heterogeneous media,
    \begin{equation*}\label{0}
  u_t=\int_{\mathbb{R}^N}J(y-x)u(y, t)dy-u(x, t)+u(x, t)f_1(x, t, u, G_1\ast u), \qquad  x\in \mathbb{R}^N,\eqno({\bf P_1})
    \end{equation*}
 \begin{equation*}\label{87}
     u_t=\int_{\Omega}J(y-x)u(y, t)dy-u(x, t)+u(x, t)f_2(x, t, u, G_2\ast u), \qquad  x\in \bar{\Omega}, \eqno(\bf P_2)
    \end{equation*}
     \begin{equation*}\label{88}
  \quad  u_t=\int_{\Omega}J(y-x)(u(y, t)-u(x, t))dy+u(x, t)f_3(x, t, u, G_3\ast u), \qquad  x\in \bar{\Omega},\eqno(\bf P_3)
    \end{equation*}
    where $N\geq 1$, $\Omega\subset \mathbb{R}^N$ is a smooth bounded domain,
    \begin{equation*}
       \begin{aligned}
& G_1*u=\int_{\mathbb{R}^N} G(y-x)u(y,t)dy,\\
& G_2*u=G_3*u=\int_{\Omega} G(y-x)u(y,t)dy,
 \end{aligned}
\end{equation*}
    and
        \begin{equation*}
       \begin{aligned}
& f_1(x, t, u, G\ast u)=a_1(x,t)-b_1(x,t)u(x, t)-c_1(x,t)G_1\ast u (x, t),\\
& f_2(x, t, u, G\ast u)=a_2(x,t)-b_2(x,t)u(x, t)-c_2(x,t)G_2\ast u (x, t),\\
& f_3(x, t, u, G\ast u)=a_3(x,t)-b_3(x,t)u(x, t)-c_3(x,t)G_3\ast u (x, t).
 \end{aligned}
\end{equation*}

Throughout this paper, we always assume

\begin{itemize}
   \item [($\textbf{A}_0$)] {\it $J(\cdot)$ and $G(\cdot)$ are $C^1$ nonnegative convolution kernels supported on the balls $B_{r_0}$ and $B_{r_1}$, respectively, where $0<r_1, r_0<\infty$ and $B_r$ is a ball centered at $0$ with radius $r$, and
         $$\int_{\mathbb{R}^N}J(z)dz=\int_{\mathbb{R}^N}G(z)dz=1.$$
The functions $a_i(x,t)$, $b_i(x,t)$ and $c_i(x,t)$ $(i=1,2,3)$ are nonnegative, continuous in $(x,t)$,   periodic in $t$ with period $T$, and $a_1(x+p_j\textbf{e}_j, t)=a_1(x, t)$,  $b_1(x+p_j\textbf{e}_j, t)=b_1(x, t)$ and  $c_1(x+p_j\textbf{e}_j, t)=c_1(x, t)$  for $p_j>0$, $\textbf{e}_j=(e_{1j}, e_{2j},\cdots, e_{Nj})$ $(j=1,2,\cdots N)$ and for $k, j=1,2,\cdots N$, $e_{kj} = 1$ if $k=j$ and $0$ if $k\neq j$.}
\end{itemize}

 For convenience, let
    \begin{equation*}
       \begin{aligned}
&\mathcal{L}_1[u]=\int_{\mathbb{R}^N}J(y-x)u(y, t)dy-u(x, t),\\
& \mathcal{L}_2[u]=\int_{\Omega}J(y-x)u(y, t)dy-u(x, t),\\
& \mathcal{L}_3[u]=\int_{\Omega}J(y-x)(u(y, t)-u(x, t))dy.
 \end{aligned}
\end{equation*}

Systems $(\textbf{P}_1)$,  $(\textbf{P}_2)$ and $(\textbf{P}_3)$ can be used to model the evolution of the population of a species with nonlocal internal dispersal and  nonlocal intra-specific competition. In such case,
 $u(x,t)$ represents the population density of the species at space location $x$ and time $t$;
 $\mathcal{L}_i[u]$  $(i=1,2,3)$ characterizes the long range interaction or movement of the species and dispersal kernel $J$ describes the probability that the species jump from one position to another;  $a_i(x, t)$ $(i=1,2,3)$ denotes the intrinsic growth rate function; the term  $-b_i(x, t)u$ $(i=1,2,3)$ describes the competition of the individuals of the species for the resources in the environment that only depends on the value of $u(x, t)$ at point $x$; the term $-c_i(x,t)G_i\ast u$ $(i=1,2,3)$  accounts for competition of the individuals
of the species for the resources in the environment that not only depends on the value of $u(x, t)$ at point $x$, but also on the value of $u$ in a neighborhood of $x$. For any fixed $t\geq 0$, if we confine the solution of $(\mathbf{P}_1)$ to the space $X_1$ of space periodic functions  (see Section \ref{102}), then we call $(\mathbf{P}_1)$ is of periodic boundary condition $u(x+p_j\textbf{e}_j, t)=u(x, t)$ for $x\in \mathbb{R}^N$. $(\mathbf{P}_2)$ is of Dirichlet type boundary condition $\int_{\mathbb{R}^N\backslash \Omega}J(y-x)u(y)dy=0$ for $x \in \bar{\Omega}$, which describes a hostile environment outside $\Omega$ and any individual that jumps outside dies instantaneously. $(\mathbf{P}_3)$ is of Neumann type boundary condition $\int_{\mathbb{R}^N\backslash \Omega} J(y-x)(u(y)-u(x))dy=0$ for $x\in \bar{\Omega}$, which means that the individuals may not enter or leave the domain $\Omega$.

 Observe that $(\textbf{P}_i)$ can be put in the following general form,
  \begin{equation}\label{84}
    u_t=\textit{A}_i(u)+uf_i(x, t, u, K_i(u)),\quad x\in \Omega_i,
    \end{equation}
  complemented with proper boundary conditions,
where
 \begin{equation}
 \label{omega-i-eq}
 { \Omega_i}=\begin{cases}
   \mathbb{R}^N, \ \text{for}\ i=1,\\
   \bar{\Omega}\quad \text{for}\ i=2,3,
 \end{cases}
 \end{equation}
and $\textit{A}_i$, $\textit{K}_i$ are linear operators with respect to $u$.
 In fact, for $i=1,2,3$,  if $\textit{A}_i(u)=\mathcal{L}_i[u]$  and $\textit{K}_i(u)=G_i\ast u$, then  (\ref{84}) becomes $(\mathbf{P}_i)$.

Equation \eqref{84} with various special $\textit{A}_i$ and $\textit{K}_i$ induces many important equations in literature. For example,
 for $i=1,2,3$, if $\textit{A}_i(u)=\Delta u$ and $\textit{K}_i(u)=u$, then (\ref{84}) gives rise to  the following reaction-diffusion equations,
\begin{equation}\label{95}
    \begin{aligned}
   & u_t=\Delta u+u(x, t)f_1(x, t, u), \qquad  x\in \mathbb{R}^N,
    \end{aligned}
    \end{equation}
 \begin{equation}\label{96}
   \left\{
      \begin{aligned}
   & u_t=\Delta u+u(x, t)f_2(x, t, u), \qquad  x\in \Omega,\\
   & u(x, t)=0, \qquad x \in \partial \Omega,
       \end{aligned}
        \right.
    \end{equation}
    and
     \begin{equation}\label{97}
    \left\{
      \begin{aligned}
   & u_t=\Delta u+u(x, t)f_3(x, t, u), \qquad  x\in \Omega,\\
   & \frac{\partial u}{\partial n} (x, t)=0, \qquad x \in \partial \Omega;
       \end{aligned}
        \right.
    \end{equation}
 if $\textit{A}_i(u)=\Delta u$ and $\textit{K}_i(u)=G_i\ast u$, then (\ref{84}) induces the following nonlocal reaction-diffusion equations,
 \begin{equation}\label{89}
    \begin{aligned}
   & u_t=\Delta u+u(x, t)f_1(x, t, u, G_1\ast u), \qquad  x\in \mathbb{R}^N,
    \end{aligned}
    \end{equation}
 \begin{equation}\label{90}
   \left\{
      \begin{aligned}
   & u_t=\Delta u+u(x, t)f_2(x, t, u, G_2\ast u), \qquad  x\in \Omega,\\
   & u(x, t)=0, \qquad x \in \partial \Omega,
       \end{aligned}
        \right.
    \end{equation}
    and
     \begin{equation}\label{91}
    \left\{
      \begin{aligned}
   & u_t=\Delta u+u(x, t)f_3(x, t, u, G_3\ast u), \qquad  x\in \Omega,\\
   & \frac{\partial u}{\partial n} (x, t)=0, \qquad x \in \partial \Omega;
       \end{aligned}
        \right.
    \end{equation}
 if $\textit{A}_i(u)=\mathcal{L}_i[u]$ and $\textit{K}_i(u)=u$, then  (\ref{84}) becomes the following nonlocal dispersal equations,
  \begin{equation}\label{92}
    \begin{aligned}
   & u_t=\int_{\mathbb{R}^N}J(y-x)u(y, t)dy-u(x, t)+u(x, t)f_1(x, t, u), \qquad  x\in \mathbb{R}^N,
    \end{aligned}
    \end{equation}
 \begin{equation}\label{93}
    u_t=\int_{\Omega}J(y-x)u(y, t)dy-u(x, t)+u(x, t)f_2(x, t, u), \qquad  x\in \bar{\Omega},
    \end{equation}
    and
     \begin{equation}\label{94}
    u_t=\int_{\Omega}J(y-x)(u(y, t)-u(x, t))dy+u(x, t)f_3(x, t, u), \qquad  x\in \bar{\Omega}.
    \end{equation}

Equations (\ref{95})-(\ref{97}) are widely used to study the population dynamics of single species with local diffusion and local intra-specific competition.  Equations (\ref{89})-(\ref{91}) are called nonlocal reaction-diffusion equations which are used to model the dynamics of single species with local diffusion and nonlocal intra-specific competition. The reader { is referred} to \cite{furter1989local} for the introduction of nonlocal reaction term in population biology. Equations (\ref{92})-(\ref{94}) can be used to investigate the population dynamics of single species with only nonlocal dispersal operator. For the background of nonlocal dispersal operator, {the reader is referred} to \cite{andreu2010nonlocal,hutson2006non} and the references therein.

Among the central dynamical issues for $(\textbf{P}_1)$-$(\textbf{P}_3)$ (resp. (\ref{95})-(\ref{97}), (\ref{89})-(\ref{91}), (\ref{92})-(\ref{94})) are global existence of solutions with given nonnegative initial functions; uniform persistence of the population;
 existence, uniqueness and stability of positive time periodic solutions; spreading speeds and traveling wave solutions of  $(\textbf{P}_1)$
 (resp. (\ref{95}), (\ref{89}), (\ref{92})).
 There exist many works on these central dynamical issues. In the following, we recall some relevant works.

First,  models (\ref{95})-(\ref{97}) have been { extensively} investigated.  Model (\ref{95}) in time homogeneous case has been studied in the two pioneering works of Fisher \cite{fisher1937wave}  and Kolmogorov et al. \cite{KPP1937}, in which a well-known propagation phenomenon{ was obtained}. The reader is referred to  \cite{berestycki2005analysis,berestycki2005analysisb} for the study of persistence and pulsating
traveling fronts for (\ref{95}) with general local dispersal operator and general local reaction term in space periodic media.  Nadin in \cite{nadin2010existence,nadin2007reaction,nadin2009traveling} investigated (\ref{95}) with general local dispersal operator and general local reaction term in space-time periodic media, including the  existence, uniqueness and stability of periodic solutions 
and properties of traveling wave solutions.
   The reader is referred to the book \cite{hess1991periodic} and references therein for the study of the existence and stability properties of time periodic solution and asymptotic behavior of the solution of initial value problem associated with (\ref{96})-(\ref{97})  in more general case of local dispersal operator and local reaction term. From the papers mentioned  in the above, it is known that the properties of positive periodic solutions of (\ref{95})-(\ref{97}) in the general case are determined by the sign of the principal eigenvalue of the corresponding linearized equation at the null state.

 Second, models (\ref{89})-(\ref{91}) have been studied mainly  in the  time independent case. Most existing works are on (\ref{89}) with $b(x, t)\equiv 0$.  Hamel and Ryzhik \cite{hamel2014nonlocal} studied some properties of solutions of (\ref{89}) with $b(x, t)\equiv 0$ and $N=1$, including existence of non-constant  space periodic steady state, spreading speed and global boundedness  for the solution of the Cauchy problem. The study of (\ref{89}) with $b(x, t)\equiv 0$ and $N>1$ has been given in \cite{berestycki2009non}, in which the steady states and  traveling wave solutions connecting these states have been investigated. For more works on (\ref{89}) with $b(x, t)\equiv 0$, the reader{ is  referred}  to
\cite{alfaro2012rapid,gourley2000travelling,genieys2006pattern},  and for more works on nonlocal reaction-diffusion model on the whole space, the reader is  referred to \cite{apreutesei2010spatial, apreutesei2009travelling,billingham2003dynamics,deng2008nonlocal,deng2015global}. For the study of (\ref{90})-(\ref{91}) in time independence case, we refer the reader to \cite{bian2017global, correa2011some,delgado2016study,sun2013existence}, in which the authors applied the bifurcation theory and monotonicity methods to study the existence and stability of steady states.

Third, models (\ref{92})-(\ref{94}) have been  recently studied  with quite general reaction term. In the case { that}  $f_1(x, t, u)=f_1(x, u)$ is spatially periodic, the existence, uniqueness and stability of positive stationary solutions of  (\ref{92}) have been studied in \cite{coville2008existence,shen2012stationary,shen2010spreading}. Berestycki, Coville, and Vo \cite{berestycki2016persistence} employed the principal eigenvalue theory and obtained a persistence criteria for (\ref{92})  with the reaction term  that is time independent, but  depends on  $x$.  For the study of traveling waves and spreading properties of (\ref{92}) in time independent case, the reader is referred to \cite{coville2005propagation,shen2010spreading,shen2012traveling}.  For the time periodic case, Rawal and Shen \cite{Rawal2012} studied the principal eigenvalue theory of operators $\mathcal{L}_i[u]$ ($i=1,2,3$) and furthermore, applied the principal eigenvalue theory and the tool of part metric to obtain the existence, uniqueness and global stability of positive time periodic solutions for  (\ref{92})-(\ref{94}). In particular, Rawal and Shen \cite{Rawal2012} showed that the properties of positive time periodic solutions are determined by the sign of the principal spectrum point of the corresponding linearized equation of  (\ref{92})-(\ref{94})  at the null state. We  refer the reader to \cite{NWA2015} for the study of spreading properties and traveling waves of (\ref{92}) in time and space periodic case and to \cite{Shen2017} for the study of properties of transition waves and positive entire solutions of (\ref{92}) in general time and space dependence. For the study of other aspects of nonlocal dispersal models, the reader is referred  to \cite{shen2016spectral,sun2017periodic} and the references therein.

There exist some recent works on $(\textbf{P}_1)$-$(\textbf{P}_2)$ when $a_i(x,t)$, $b_i(x,t)$, and $c_i(x,t)$ are constant functions, but there is no work on $(\textbf{P}_3)$ even in the homogenous case. For $(\textbf{P}_1)$ with both local and nonlocal interaction in time and space independent case, we  refer the reader to \cite{finkelshtein2018doublyb} for the study of the existence and properties of traveling wave solutions, to \cite{finkelshtein2018doublya} for the study of the front propagation, to \cite{kuehn2018pattern} for the study of the pattern formation in one dimensional space. For the study of $(\textbf{P}_1)$ with only nonlocal interaction, the reader is referred  to \cite{finkelshtein2015traveling,finkelshtein2017global} and the references therein. The dynamics of $(\textbf{P}_2)$ recently has been investigated in \cite{ma2018dynamics}.


However, there is little study  on the central dynamical issues for $(\textbf{P}_1)$-$(\textbf{P}_3)$ with
 $a_i(x,t)$, $b_i(x,t)$ and $c_i(x,t)$ being non-constant functions. The difficulties for the study of $(\mathbf{P}_1)$-$(\mathbf{P}_3)$ lie in many aspects. One of them is the lack of compactness and regularities of the solutions of nonlocal dispersal evolution equations (which do not arise in the study of (\ref{95})-(\ref{97}) and (\ref{89})-(\ref{91})); another is the lack of standard comparison principle (which do not arise in the study of (\ref{95})-(\ref{97}) and (\ref{92})-\ref{94}). Moreover, the space and time dependence of the coefficients gives rise to some additional difficulties.

The objective of the current paper is to study the dynamical behaviors  of $(\mathbf{P}_1)$-$(\mathbf{P}_3)$, including persistence and the properties of positive time periodic solutions of $(\mathbf{P}_1)$-$(\mathbf{P}_3)$.
 Mainly, { under proper conditions on   $J$, $G$, and $a_i,b_i,c_i$}, we will prove
\begin{itemize}
  \item global existence and boundedness of  solutions to  $(\mathbf{P}_1)$-$(\mathbf{P}_3)$ with nonnegative initial functions (see Theorem \ref{98});

  \item uniform persistence of systems $(\mathbf{P}_1)$-$(\mathbf{P}_3)$ (see Theorem \ref{73});

      \item existence, uniqueness and global stability of positive time periodic solutions of $(\mathbf{P}_1)$-$(\mathbf{P}_3)$ (see Theorem \ref{104}).
\end{itemize}
We will study the spreading speeds  and traveling wave solutions of
$(\textbf{P}_1)$ in our future works.

The rest of the paper is organized as follows. In Section \ref{102}, we introduce some standing notations, assumptions, and definitions,
 and state our main results.  We devote Section \ref{52}-\ref{76} to the proofs of the main results.

\section{Notations, assumptions, definitions and main results}\label{102}

In this section, we introduce some standing notations, assumptions and definitions,
 and state the main results of the paper.

\subsection{Notations}

In this subsection, we introduce some standing notations.

 Let
 $$D_i=\begin{cases}
   [0, p_1]\times [0, p_2] \times \cdots \times [0, p_N], \quad \,\, \ \text{for}\ i=1,\\
   \bar{\Omega},\quad \quad \quad \quad \quad \quad \quad \quad \quad \quad \quad \quad \quad \,\,  \text{for}\ i=2,3.
 \end{cases}$$
Let
 \begin{equation*}
\begin{aligned}
     & a_{iM}=\max_{D_i\times [0, T]} a_i(x,t),\quad b_{iM}=\max_{D_i\times [0, T]} b_i(x, t), \quad c_{iM}=\max_{D_i\times [0, T]} c_i(x,t),\\
   & a_{iL}=\min_{D_i\times [0, T]} a_i(x,t),\quad b_{iL}=\min_{D_i\times [0, T]} b_i(x, t), \quad c_{iL}=\min_{D_i\times [0, T]} c_i(x,t),
\end{aligned}
 \end{equation*}
 and
\begin{equation}\label{111}
\begin{cases}
    g_{i,m}=\inf \limits_{x\in \Omega_i}\int_{\Omega_i} G(y-x) dy, \quad g_{i,M}=\sup \limits_{x\in \Omega_i}\int_{\Omega_i} G(y-x) dy,\cr
    j_{i,m}=\inf \limits_{x\in \Omega_i}\int_{\Omega_i} J(y-x) dy.
    \end{cases}
    \end{equation}

We  denote $|\cdot|$ the norm in $\mathbb{R}$ and $\|\cdot\|$ the norm in $\mathbb{R}^N$, and define the following spaces:
$$\hat{X}_1=\big{\{}u\in C(\mathbb{R}^N, \mathbb{R}): u\ \text{is uniformly continuous and bounded}\big{\}}$$
with norm $\|u\|_{\hat{X}_1}=\sup \limits_{x\in \mathbb{R}^N} |u(x)|$;
$$X_{1}=\big{\{}u\in \hat{X}_1: u(\cdot+p_i\textbf{e}_i)=u(\cdot)\big{\}}$$
with norm $\|u\|_{X_1}=\sup \limits_{x\in D_1} |u(x)|$;
$$\hat{X}_i=X_{i}=C(\bar{\Omega}, \mathbb{R}),\quad i=2,3
$$
 with norm $\|u\|_{\hat{X}_i}=\|u\|_{X_i}=\sup \limits_{x\in \bar{\Omega}} |u(x)|$;  and
$$X_i^+(\hat{X}_i^+)=\big{\{}u\in X_i(\hat{X}_i): u\geq 0 \big{\}}, \quad i=1,2,3,$$
$$X_i^{++}=\big{\{}u\in X_i^{+}: u(x)>0 \ \forall \ x\in \Omega_i \big{\}}, \quad i=1,2,3.$$
For given $i=1,2,3$, the solution $u_i(x,t)$ of $(\mathbf{P}_i)$ with initial value $u_i(\cdot,0)=u_0\in \hat{X}_i$, if it exists, is denoted by $u_i(x, t; u_0)$.

 \subsection{Definitions}

In this subsection, we introduce the definition of sup- and sub- solutions of $(\mathbf{P}_1)$-$(\mathbf{P}_3)$ and the definition of uniform
persistence of $(\mathbf{P}_1)$-$(\mathbf{P}_3)$.

\begin{definition}\label{23}
For given $i=1,2,3$, a pair of positive bounded continuous functions $\overline{U}_i(x, t)$ and $\underline{U}_i(x, t)$ on ${ \Omega_i}\times [0,  \infty)$ are called {\rm a pair of sup- and sub- solutions of $(\mathbf{P}_i)$} if $\frac{\partial \overline{U}_i}{\partial t}$ and $\frac{\partial \underline{U}_i}{\partial t}$ exist and are continuous on ${ \Omega_i}\times [0,  \infty)$,
and
\begin{equation*}
    \left\{
    \begin{aligned}
   & \overline{U}_{it}-\mathcal{L}_i[\overline{U}_i] \geq \overline{U}_i[a_i(x,t)-b_i(x,t)\overline{U}_i-c_i(x,t)G_i\ast \underline{U}_i]\\
   & \underline{U}_{it}-\mathcal{L}_i[\underline{U}_i]\leq \underline{U}_i[a_i(x,t)-b_i(x,t)\underline{U}_i-c_i(x,t)G_i\ast \overline{U}_i]
    \end{aligned}
    \right.
    \end{equation*}
    for $(x, t)\in { \Omega_i}\times [0, \infty)$.
\end{definition}

 \begin{definition}
For given $i=1,2,3$, we call  system $(\mathbf{P}_i)$ is {\rm uniformly persistent} if for any $u_0\in\hat X_i^+$, $u_i(x,t;u_0)$ exists for all
$t\ge 0$,
and if there exists ${\eta}_i>0$, such that for any $u_0$ satisfies
  \begin{equation} \label{new-added-eq1}
  \begin{cases}
   u_0\in \hat{X}_i^+\ \text{with}\ \inf \limits_{x\in \mathbb{R}^N} u_0(x)>0,    & \quad \text{for}\ i=1,\\
   u_0\in \hat{X}_i^+\setminus \{0\}, & \quad \text{for}\ i=2,3,
  \end{cases}
\end{equation}
 there exists ${T}_i(u_0)>0$ such that
	$${\eta}_i\leq u_i(x, t; u_0) \quad \text{for all}\ x\in { \Omega_i},\ t\geq {T}_i(u_0).$$
\end{definition}

 \subsection{Assumptions}

 In this subsection, we introduce some standing assumptions and make some remarks on the assumptions.

We first give the following standing assumption.

\begin{itemize}
   \item [($\textbf{A}_1$)] {\it  For $i=1,2,3$,
        \begin{equation*}
\begin{aligned}
& 0<a_{iL}\leq a_{iM}<\infty,\\
& 0<b_{iL}\leq b_{iM}<\infty,\\
& 0\le c_{iL}\leq c_{iM}<\infty.
 \end{aligned}
 \end{equation*}
 For $i=2$,
 $$
 j_{i,m}-1+a_{iL}>0.
 $$}
 \end{itemize}

 \begin{remark}
 \label{A1-rk}
 Assumption $(\textbf{A}_1)$ gives sufficient conditions for the instability of the trivial solution $u\equiv 0$ and for the existence, uniqueness, and stability of strictly positive time periodic solutions of the equation
\begin{equation}
\label{112}
u_t=\mathcal{L}_i[u]+u[a_i(x, t)-b_i(x, t)u], \quad x\in { \Omega_i} \quad i=1,2,3.
\end{equation}
 In fact, we have the following lemma.
 \end{remark}

\begin{lemma}
\label{basic-lm}
  Assume  $(\textbf{A}_0)$ and $(\textbf{A}_1)$ and let $i=1,2,3$ be given. Then \eqref{112}
has exactly two  time periodic solutions, $u=0$ and $u=u_i^*(x,t)$ with { $\inf_{x\in\Omega_i,t\in\R}u_i^*(x,t)>0$}. Moreover, $u=0$ is linearly unstable and the positive time periodic solution $u_i^*(x,t)$ is globally asymptotically stable in the sense that
$$\|\hat{u}_i(\cdot, t; u_0)-u_i^*(\cdot,t)\|_{\hat{X}_i}\rightarrow 0, \quad  t\rightarrow \infty$$
for any initial value $u_0$ satisfying (\ref{new-added-eq1}),
where $\hat{u}_i(x, t; u_0)$ is a solution of (\ref{112}) with initial value $u_0$.
\end{lemma}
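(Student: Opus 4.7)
The plan is to reduce Lemma \ref{basic-lm} to the principal eigenvalue theory and the existence, uniqueness, and stability results for nonlocal dispersal logistic equations obtained by Rawal--Shen \cite{Rawal2012}. Equation \eqref{112} coincides with the scalar nonlocal dispersal logistic equations \eqref{92}--\eqref{94}, so the task specific to our setting is only to verify, from $(\textbf{A}_1)$, that the principal Lyapunov exponent of the linearization at $u=0$ is strictly positive.

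Let $\lambda_i$ denote the principal Lyapunov exponent of the $T$-periodic linear problem
$$v_t=\mathcal{L}_i[v]+a_i(x,t)\,v,\qquad x\in\Omega_i,$$
on $X_i$. A standard lower bound, obtained by comparing the evolution of initial data $v\equiv 1$ with a spatially constant sub-solution, yields
$$\lambda_i \;\geq\; \frac{1}{T}\int_0^T\min_{x\in\Omega_i}\bigl(a_i(x,t)+\alpha_i(x)\bigr)\,dt,$$
where $\alpha_i(x)=\mathcal{L}_i[1](x)$; explicitly $\alpha_1\equiv 0$, $\alpha_2(x)=\int_\Omega J(y-x)\,dy-1$, and $\alpha_3\equiv 0$. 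Under $(\textbf{A}_1)$ this gives $\lambda_1,\lambda_3\geq a_{iL}>0$ and $\lambda_2\geq j_{2,m}-1+a_{2L}>0$. In each case $\lambda_i>0$, which is the linear instability of $u=0$. Global existence and uniform boundedness of nonnegative solutions then follow from the scalar comparison principle applicable to $\mathcal{L}_i$ together with the locally Lipschitz reaction $u(a_i-b_iu)$: the constant $M:=a_{iM}/b_{iL}$ is a time-independent super-solution and $0$ is a sub-solution, so every orbit issuing from $\hat X_i^+$ is eventually absorbed into $[0,M]$.

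Exploiting $\lambda_i>0$, one constructs, as in \cite{Rawal2012}, a strictly positive sub-solution of \eqref{112} of arbitrarily small amplitude from a (possibly generalized) positive principal eigenfunction of the linearization. Monotone iteration of the Poincar\'e period map $Q_i:u_0\mapsto\hat u_i(\cdot,T;u_0)$ between this sub-solution and the super-solution $M$ produces monotone sequences whose limits $\underline u_i^*\leq\overline u_i^*$ are strictly positive $T$-periodic solutions of \eqref{112}. Uniqueness $\underline u_i^*=\overline u_i^*=:u_i^*$ and global stability of $u_i^*$ for initial data satisfying \eqref{new-added-eq1} then follow from the part-metric contraction argument of \cite{Rawal2012}: on the cone of strictly positive functions, the part metric
$$\rho(u,v)=\log\inf\{\alpha\geq 1 : \alpha^{-1}v\leq u\leq\alpha v\}$$
is strictly decreased by $Q_i$ along any non-identical orbit, thanks to the strict decrease of $u\mapsto a_i(x,t)-b_i(x,t)u$.

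The main obstacle is the construction of the small strictly positive sub-solution: in the nonlocal dispersal setting $\lambda_i$ is in general not an eigenvalue with a continuous positive eigenfunction, so no direct Krein--Rutman argument is available. This difficulty is circumvented in \cite{Rawal2012} by an approximation procedure (for instance, perturbing $\mathcal{L}_i$ by a small classical diffusion with suitable boundary conditions, solving the associated classical eigenvalue problem, and passing to the limit); we will invoke that machinery rather than re-derive it here.
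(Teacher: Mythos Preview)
Your proposal is correct and follows essentially the same route as the paper: both reduce Lemma \ref{basic-lm} to the results of Rawal--Shen \cite{Rawal2012} (Theorem E) after verifying that $(\textbf{A}_1)$ forces the principal spectrum point of the linearization at $u=0$ to be strictly positive. The paper's proof is a two-line citation (invoking the comparison principle, \cite[Proposition 3.3]{shen2016spectral}, and \cite[Theorem E]{Rawal2012}), whereas you spell out the sub-solution lower bound $\lambda_i\ge j_{i,m}-1+a_{iL}$ explicitly and sketch the part-metric and monotone-iteration machinery; the substance is the same.
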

\begin{proof}
Using the standard comparison principle for (\ref{112}) and \cite[Proposition 3.3]{shen2016spectral}, the proof of the lemma   follows from the arguments in \cite[Theorem E]{Rawal2012}.
\end{proof}

Let $(\textbf{A}_2)$  be the following standing assumption.

\begin{itemize}
\item[$(\textbf{A}_2$)]  $a_i(t, x)-c_i(x, t)G_i*u_i^*(x, t)>0$ for $(x,  t)\in { \Omega_i}\times [0,\infty)$ for $i=1,3$,
and  $\int_{\Omega_i} J(y-x)dy-1+a_i(t, x)-c_i(x, t)G_i*u_i^*(x, t)>0$ for $(x,  t)\in { \Omega_i}\times [0,\infty)$ for $i=2$.
\end{itemize}

\begin{remark}
\label{A2-A3-rk}
\begin{itemize}
\item[(1)]
As it is pointed out in Remark \ref{A1-rk}, assumption $(\textbf{A}_1)$ gives sufficient conditions for the instability of the trivial solution $u\equiv 0$ of \eqref{112} and hence gives sufficient conditions for the instability of the trivial solution $u_i\equiv 0$ of $(\mathbf{P}_i)$.
In the case where the nonlocal reaction in  $(\mathbf{P}_i)$ is absent, that is, $c_i(x,t)\equiv 0$,  by Lemma \ref{basic-lm},
persistence occurs in $(\mathbf{P}_i)$ and $(\mathbf{P}_i)$ has a unique globally asymptotically stable strictly positive periodic solution.
In general, we will show that
Assumption $(\textbf{A}_1)$ together with $(\textbf{A}_2)$   implies the persistence in $(\mathbf{P}_i)$ (see Theorem \ref{73}).

\item[(2)]
 If $a_{iL}-\frac{c_{iM}a_{iM}}{b_{iL}}>0$ (resp. $j_{i,m}-1+a_{iL}-\frac{c_{iM}a_{iM}}{b_{iL}}g_{i,M}>0$, $a_{iL}-\frac{c_{iM}a_{iM}}{b_{iL}}g_{i,M}>0$), then $(\textbf{A}_2$) holds for $i=1$ (resp. $i=2$, $i=3$).

\item[(3)] For  given $i=1$, $2$, or $3$, assume $(\textbf{A}_0)$-$(\textbf{A}_2)$. It can be proved that
	 there exist two continuous positive periodic functions $\underline{U}^i(x, t)\le \overline{U}^i(x, t)$ such that
\begin{equation}
\label{new-two-species-eq1}
\begin{cases}
\overline{U}^i_t=\mathcal{L}_i[\overline{U}^i]+\overline{U}^i[a_i(x,t)-b_i(x,t)\overline{U}^i-c_i(x,t)G_i\ast \underline{U}^i],\quad { x\in\Omega_i,\,\, t\in\R}\cr
\underline {U}^i_t=\mathcal{L}_i[\underline {U}^i]+\underline {U}^i[a_i(x,t)-b_i(x,t)\underline {U}^i-c_i(x,t)G_i\ast \overline{U}^i], \quad { x\in\Omega_i,\,\, t\in\R}
\end{cases}
\end{equation}
(see Theorem \ref{73}).
If $c_i(t,x)\equiv 0$, it is clear that
$\overline{U}^i(x,t)=\underline{U}^i(x,t)=u_i^*(x,t)$ for all $x\in\Omega_i$, $t\in\R$, and $i=1,2,3$.

\item[(4)] Let \begin{equation}\label{99}
  \hat{u}_1^*=\frac{a_{iM}b_{1M}-a_{1L}c_{1L}}{b_{1L}b_{1M}-c_{1L}c_{1M}}, \quad \hat{u}_{*1}=\frac{a_{1L}b_{1L}-a_{1M}c_{1M}}{b_{1L}b_{1M}-c_{1L}c_{1M}},
  \end{equation}
and
\begin{equation}\label{103}
  \hat{u}_3^*=\frac{a_{3M}b_{3M}-a_{3L}c_{3L}g_{3,m}}{b_{3L}b_{3M}-c_{3L}c_{3M}g_{3,m}g_{3,M}}, \quad \hat{u}_{*3}=\frac{a_{3L}b_{3L}-a_{3M}c_{3M}g_{3,M}}{b_{3L}b_{3M}-c_{3L}c_{3M}g_{3,m}g_{3,M}}.
\end{equation}
If \begin{equation}\label{74}
\begin{cases}
   a_{iL}b_{iL}-a_{iM}c_{iM}>0 \ \text{for}\ i=1\\
    a_{iL}b_{iL}-a_{iM}c_{iM}g_{i,M}>0\quad \text{for}\ i=3,
 \end{cases}
	\end{equation}
then
$$
0<\hat u_{*i}\le \underline{U}^i(t,x)\le \overline{U}^i(t,x)\le \hat u_i^*\le \frac{a_{iM}}{b_{iL}},\quad \forall\,\, t\in\R,\,\, x\in { \Omega_i},\, \, { i=1,3}.
$$
\end{itemize}
\end{remark}

 Let  ($\textbf{A}_3$), ($\textbf{A}_4$)  and  ($\textbf{A}_5$)  be the following standing assumptions,

\begin{itemize}
	\item [($\textbf{A}_3$)] $r_0>r_1$ and $J_{m}> c_{iM}\frac{a_{iM}}{b_{iL}}G_{M}$, where $i=1,2,3$,
	\begin{equation}\label{HH}
	J_{m}=\inf \limits_{x\in B_{r_1}} J(x), \quad G_{M}=\sup \limits_{x\in B_{r_1}} G(x).
	\end{equation}
\end{itemize}

\begin{itemize}
	\item [($\textbf{A}_4$)] ($\textbf{A}_2$) holds  and for $i=1,2,3$,
\begin{equation}
\label{persistence-cond-eq}
h_1^i(t,x)+h_2^i(t,x)<0,
\end{equation}
 where
\begin{align*}
&h_1^i(x, t)=a_i(x, t)-2b_i(x, t)\underline{U}^i-c_i(x, t)G_i*\underline{U}^i,\\
&h_2^i(x, t)=c_i(x, t)\overline{U}^i,
\end{align*}
and $\overline{U}^i$ and $\underline{U}^i$ are as in Remark \ref{A2-A3-rk} (3).
\end{itemize}

\begin{itemize}
\item [($\textbf{A}_5$)] $a_1(x, t)\equiv a(t)$, $b_1(x, t)\equiv b(t)$, $c_1(x, t)\equiv c(t)$, and  $b_{1L}>c_{1M}$.
\end{itemize}

\begin{remark}
\label{A4-A5-rk}
\begin{itemize}
\item[(1)] In the case where the nonlocal reaction term is absent  in  $(\mathbf{P}_i)$, that is, $c_i(x,t)\equiv 0$, the occurrence of persistence in $(\mathbf{P}_i)$ implies the existence, uniqueness, and stability of strictly positive periodic solutions of  $(\mathbf{P}_i)$. In general, we will prove that ($\textbf{A}_1$) together with ($\textbf{A}_3$) or  ($\textbf{A}_1$) together with ($\textbf{A}_4$) implies the existence, uniqueness, and stability of strictly positive periodic solutions of $(\mathbf{P}_i)$ (see Theorem \ref{104} (1)(2)), and that ($\textbf{A}_1$) together with ($\textbf{A}_5$) implies the existence, uniqueness, and stability of strictly positive periodic solutions of $(\mathbf{P}_1)$ (see Theorem \ref{104} (3)).

\item[(2)] Let $\hat{u}_i^*$ and   $\hat{u}_{*i}$ be as in Remark \ref{A2-A3-rk}(4). If ($\textbf{A}_2$) and \eqref{74} hold,
and
\begin{equation}\label{108}
a_{iM}-2b_{iL}\hat{u}_{*i}-c_{iL}g_{m}\hat{u}_{*i}+c_{iM}\hat{u}_i^*<0,
\end{equation}
then ($\textbf{A}_4$) holds for $i=1,3$.
\end{itemize}
\end{remark}

\subsection{Main results}

In this subsection, we state the main results of this paper.

 First, the following theorem includes  our main results on the global existence and boundedness   of nonnegative solutions for $(\mathbf{P}_1)$-$(\mathbf{P}_3)$.

 \begin{theorem}[Global existence and boundedness]\label{98} { Assume $(\textbf{A}_0)$, and
 let $i=1,2,3$ be given}.
 \begin{itemize}
 \item[(1)]
 $(\mathbf{P}_i)$ with initial value $u_0\in \hat{X}_i^+$ has a global solution $u_i(x,t; u_0)\in \hat{X}_i^+$, and
  if $u_0\in \hat{X}^+_i$ and $u_0 \not \equiv 0$, then $u_i(x, t; u_0)>0$ for $x\in \Omega_i$ and $t>0$. In addition, if $(\textbf{A}_1)$ holds,  then for any $u_0\in \hat{X}_i^+$  and any $M_i>\max\{\|u_0\|_{\hat X_i}, { \frac{a_{iM}}{b_{iL}}}\}$,  $\|u_i(\cdot, t; u_0)\|_{\hat{X}_i}\leq M_i, \ t\in[0,\infty)$.

\item[(2)]
  Suppose that $\overline{U}_i$ and $\underline{U}_i$ are  a pair of sup- and sub- solutions of $(\mathbf{P}_i)$ and $\overline{U}_i(\cdot,t)$,
   $\underline{U}_i(\cdot,t)\in \hat{X}_i^+$ for any $t\geq 0$. Then for any $u_0\in \hat{X}_i^+$ satisfying $\underline{U}_i(x, 0)\le u_0(x)\le \overline{U}_i(x, 0)$ for $x\in\Omega_i$, $(\mathbf{P}_i)$ admits a solution $u_i(x, t; u_0)$ on ${ \Omega_i}\times [0,\infty)$ which satisfies
  $$\underline{U}_i(x, t)\leq u_i(x, t; u_0)\leq \overline{U}_i(x, t), \quad (x, t)\in { \Omega_i}\times [0,\infty).$$
  \end{itemize}
 \end{theorem}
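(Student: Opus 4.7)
The plan for Theorem \ref{98} has two parts: Part (1) proceeds by local existence via fixed point, nonnegativity via an integrating-factor trick, and global existence via comparison with a scalar logistic equation; Part (2) couples the sup/sub-solution deviations into a cooperative nonlocal system.

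For Part (1), since $\mathcal{L}_i$ is a bounded linear operator on $\hat{X}_i$ (a convolution against $J$ plus a multiplication), it generates a uniformly continuous $C_0$-semigroup $e^{t\mathcal{L}_i}$. I would rewrite $(\mathbf{P}_i)$ in mild form
\begin{equation*}
u(t)=e^{t\mathcal{L}_i}u_0+\int_0^t e^{(t-s)\mathcal{L}_i}\bigl[u(s)\,f_i(\cdot,s,u(s),G_i\ast u(s))\bigr]\,ds,
\end{equation*}
and note that the nonlinearity is locally Lipschitz from $\hat{X}_i$ into itself because $u\mapsto G_i\ast u$ is bounded linear. A Banach fixed-point argument on $C([0,\tau],\hat{X}_i)$ yields local existence and uniqueness. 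For nonnegativity, pick $K>0$ sufficiently large so that $K-1+f_i\ge 0$ on the local orbit; rewriting $u_t+Ku=\int J(y-\cdot)u(y,t)\,dy+u(K-1+f_i)$ shows both summands on the right are nonnegative whenever $u\ge 0$, and a Picard iteration (or a first-crossing-time argument) then establishes $u\ge 0$. For the a priori bound, since $c_i u\,G_i\ast u\ge 0$ the function $u$ is a subsolution of the purely logistic equation $v_t=\mathcal{L}_i[v]+v(a_{iM}-b_{iL}v)$, for which any constant $M_i\ge a_{iM}/b_{iL}$ is a supersolution, so the standard comparison for scalar nonlocal logistic equations yields $\|u(\cdot,t)\|_{\hat{X}_i}\le M_i$ and hence global existence. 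Positivity of $u(x,t)$ for all $x\in\Omega_i$, $t>0$ follows by iterating the integral representation: if $u_0(x_0)>0$ then $u(y,s)>0$ in a neighbourhood of $x_0$ for small $s>0$, and convolution with $J$ (of unit mass and support $B_{r_0}$) propagates positivity throughout $\Omega_i$ in finitely many such iterations.

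For Part (2), I would first invoke Part (1) to produce a local nonnegative solution $u$, and then set $w:=\overline{U}_i-u$, $z:=u-\underline{U}_i$. From the sup/sub-solution inequalities together with the identity $\overline{U}_i\,G_i\ast\underline{U}_i-u\,G_i\ast u=-\overline{U}_i\,G_i\ast z+w\,G_i\ast u$, an elementary computation gives
\begin{equation*}
w_t\ge \mathcal{L}_i[w]+A_1(x,t)\,w+c_i\overline{U}_i\,G_i\ast z,\qquad z_t\ge \mathcal{L}_i[z]+A_2(x,t)\,z+c_i u\,G_i\ast w,
\end{equation*}
with $A_1,A_2$ bounded on compact time intervals and both coupling coefficients nonnegative. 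This is a cooperative nonlocal system. Absorbing $A_1,A_2$ into an integrating factor $e^{\lambda t}$ with $\lambda$ sufficiently large turns the system into an integral inequality whose right-hand side is manifestly nonnegative whenever $w,z\ge 0$; a Picard iteration (or a first-hit-time argument at the spatial infimum) then shows $w,z\ge 0$ for all times of existence, so $\underline{U}_i\le u\le \overline{U}_i$. Since $\overline{U}_i$ is bounded on $\Omega_i\times[0,\infty)$, the local solution extends globally.

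I expect the main obstacle to be the comparison step for the cooperative system in Part (2): the lack of regularisation in the nonlocal operators $\mathcal{L}_i$ rules out classical maximum-principle techniques, and one must carefully track the integral cross-terms through a Picard/integrating-factor scheme. The secondary delicate point is the positivity-spreading argument in Part (1), which requires iterating the convolution structure of $J$ finitely many times to cover any prescribed point of $\Omega_i$; the boundedness of $\mathcal{L}_i$ on $\hat{X}_i$ is what ultimately makes both steps tractable.
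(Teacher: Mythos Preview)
Your Part (1) is essentially the paper's argument: local existence via semigroup/fixed-point, nonnegativity by freezing coefficients and invoking a linear comparison (the paper phrases this through Proposition \ref{100} rather than an integrating factor, but the content is the same), and global existence plus the $M_i$-bound via comparison with the scalar logistic equation \eqref{1}. The positivity-spreading you sketch is also what the paper obtains from the strong comparison in Proposition \ref{100}(3).

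Your Part (2), however, takes a genuinely different route. The paper does \emph{not} invoke Part (1) to produce the solution and then trap it; instead it runs a monotone-iteration scheme (equation \eqref{20}): starting from $(\overline{U}_i^0,\underline{U}_i^0)=(\overline{U}_i,\underline{U}_i)$ it builds ordered sequences $\underline{U}_i\le\underline{U}_i^k\le\underline{U}_i^{k+1}\le\cdots\le\overline{U}_i^{k+1}\le\overline{U}_i^k\le\overline{U}_i$, passes to pointwise limits $\underline{u}_i\le\overline{u}_i$, and then uses Lemma \ref{21} (in its discontinuous form, Remark \ref{new-added-rk2}) to force $\overline{u}_i=\underline{u}_i$, which is therefore the solution. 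Your approach is more economical: having $u$ from Part (1), you set $w=\overline{U}_i-u$, $z=u-\underline{U}_i$, compute the cooperative linear system for $(w,z)$, and close by a system version of the infimum argument in Lemma \ref{116}(2). This is correct (the coupling coefficients $c_i\overline{U}_i$ and $c_i u$ are nonnegative because $u\ge 0$ from Part (1), and all coefficients are bounded on finite time intervals), and it sidesteps the delicate point in the paper's proof where one must argue continuity of the monotone limits. The paper's iteration, on the other hand, is constructive and does not presuppose global existence of $u$, which is why it can stand independently of Part (1); your route trades that self-containment for directness.
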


Next, we state our main results on the uniform persistence of solutions of $(\mathbf{P}_i)$, $i=1,2,3$.

\begin{theorem}[Uniform Persistence]\label{73}
	For each fixed $i=1$, $2$ or $3$, assume that $(\textbf{A}_0)$-$(\textbf{A}_2)$ hold. Then persistence occurs in
$(\mathbf{P}_i)$. More precisely,
 there exist two continuous positive periodic functions $\underline{U}^i(x, t)\le \overline{U}^i(x, t)$ such that  \eqref{new-two-species-eq1}
holds,
and	  for any $\epsilon>0$ small enough
 and any initial value $u_0$ satisfying (\ref{new-added-eq1}),
 there exists $t^i_{\varepsilon, u_0}$ such that
	\begin{equation}\label{66}
	0<\underline{U}^i{ (x,t)}-\varepsilon\leq u_i(x, t; u_0)\leq \overline{U}^i{ (x,t)}+\varepsilon
	\end{equation}
	for all $x\in {\Omega_i}$ and $t>t^i_{\varepsilon, u_0}$. Moreover, $\underline{U}^i(x,0)\leq u_0\leq \overline{U}^i(x,0)$ for $x\in\Omega_i$ implies
	\begin{equation}\label{67}
	\underline{U}^i(x,t)\leq u_i(x, t; u_0)\leq \overline{U}^i(x,t),\quad \forall\,\, x\in\Omega_i,\,\, t>0.
	\end{equation}
\end{theorem}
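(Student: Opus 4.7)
The plan is to construct the envelope functions $\overline{U}^i$ and $\underline{U}^i$ by a monotone coupled iteration of scalar nonlocal logistic periodic problems with frozen nonlocal competition driver, and then to trap $u_i(\cdot,t;u_0)$ between successive iterates as $t\to\infty$. The first observation I would record is that Lemma \ref{basic-lm} extends, with essentially the same proof, to any equation $v_t=\mathcal{L}_i[v]+v[\tilde a_i(x,t)-b_i(x,t)v]$ in which $\tilde a_i$ is continuous, $T$-periodic in $t$ (space periodic when $i=1$), and the linearization at $v\equiv 0$ has a positive principal eigenvalue. Under $(\mathbf{A}_2)$ this positivity condition persists throughout the iteration: whenever $0\le U\le u_i^*$, the frozen reaction $\tilde a_i=a_i-c_iG_i*U$ dominates $a_i-c_iG_i*u_i^*$, and the $i=2$ case is handled by the extra $j_{2,m}-1$ term appearing in $(\mathbf{A}_2)$.

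Next, I would set $\overline{U}^i_0:=u_i^*$ and define inductively, for $n\ge 0$, $\underline{U}^i_n$ as the unique positive periodic solution of the scalar equation driven by $\overline{U}^i_n$, and then $\overline{U}^i_{n+1}$ as the one driven by $\underline{U}^i_n$. Since the positive periodic solution of the scalar problem is a monotone decreasing function of the driver (larger driver means smaller reaction, hence smaller periodic solution, by scalar comparison), the iteration yields
\begin{equation*}
0<\underline{U}^i_0\le\underline{U}^i_1\le\cdots\le\underline{U}^i_n\le\overline{U}^i_n\le\cdots\le\overline{U}^i_1\le\overline{U}^i_0=u_i^*.
\end{equation*}
Pointwise limits $\underline{U}^i_n\nearrow\underline{U}^i$ and $\overline{U}^i_n\searrow\overline{U}^i$ therefore exist; uniform equicontinuity of the iterates in $(x,t)$ derived from the integral form of $\mathcal{L}_i$, together with dominated convergence, allows passage to the limit in the scalar equations to obtain the coupled system \eqref{new-two-species-eq1}, with $\underline{U}^i\le\overline{U}^i$ both continuous, positive, and $T$-periodic.

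The asymptotic trapping of $u_i(x,t;u_0)$ I would prove by induction on $n$: for every small $\varepsilon>0$ there exists $T_n(\varepsilon,u_0)$ such that $\underline{U}^i_n-\varepsilon\le u_i(\cdot,t;u_0)\le\overline{U}^i_n+\varepsilon$ for all $t\ge T_n$. For $n=0$ the upper estimate holds because $c_iG_i*u_i\ge 0$ makes $u_i$ a subsolution of the scalar logistic equation $v_t=\mathcal{L}_i[v]+v[a_i-b_iv]$, so standard scalar comparison combined with the global stability in Lemma \ref{basic-lm} yields $u_i\le u_i^*+\varepsilon$ eventually; once this holds, $c_iG_i*u_i\le c_iG_i*\overline{U}^i_0+c_i\varepsilon g_{i,M}$ makes $u_i$ a supersolution of a scalar equation with frozen driver, whose unique positive periodic solution depends continuously on $\varepsilon$ and tends to $\underline{U}^i_0$ as $\varepsilon\to 0$, giving $u_i\ge\underline{U}^i_0-\varepsilon$ eventually. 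Alternating the freeze-and-compare step at each level of the iteration propagates the estimate to each $n$, and sending $n\to\infty$ yields \eqref{66}, with persistence constant any $\eta_i<\inf_{(x,t)}\underline{U}^i(x,t)$.

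Finally, \eqref{67} is obtained by noting that \eqref{new-two-species-eq1} combined with $\underline{U}^i\le\overline{U}^i$ shows $(\overline{U}^i,\underline{U}^i)$ is a sup-sub solution pair in the sense of Definition \ref{23}, so Theorem \ref{98}(2) yields the sandwich on all of $\Omega_i\times[0,\infty)$ whenever the initial datum lies between them. The main obstacle I anticipate is the absence of a standard comparison principle for the doubly nonlocal $(\mathbf{P}_i)$, which is precisely what dictates the freeze-and-iterate scheme. A secondary delicate point is keeping $\underline{U}^i_n$ bounded uniformly away from zero in $n$; this is secured by invoking $(\mathbf{A}_2)$ at each step, because every iterate stays bounded above by $u_i^*$, so every frozen-driver auxiliary equation inherits a positive principal eigenvalue at least matching the extremal case covered by $(\mathbf{A}_2)$.
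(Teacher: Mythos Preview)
Your proposal is essentially the same as the paper's proof: both build the envelopes by a monotone coupled iteration of scalar periodic logistic problems starting from $u_i^*$, use $(\mathbf{A}_2)$ to guarantee a positive principal eigenvalue at every stage, trap $u_i(\cdot,t;u_0)$ between successive iterates by alternating freeze-and-compare, and pass to the limit. Your derivation of \eqref{67} via Theorem~\ref{98}(2) is also explicitly acknowledged in the paper as an alternative to its direct inductive argument.

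One technical point deserves care: you claim ``uniform equicontinuity of the iterates in $(x,t)$ derived from the integral form of $\mathcal{L}_i$,'' but the nonlocal dispersal operator does not regularize in the spatial variable, so equicontinuity in $x$ uniformly in $n$ is not immediate from the equation alone. The paper handles the continuity of the limits $\overline{U}^i,\underline{U}^i$ in $x$ differently: it first obtains continuity (and differentiability) in $t$ from the integral formulation, then observes that the limit is a bounded periodic solution of a scalar equation $v_t=\mathcal{L}_i[v]+v[\tilde a_i-b_iv]$ with continuous $\tilde a_i$, and invokes uniqueness of the positive periodic solution of that scalar problem to identify the limit with the continuous one. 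You should either supply this argument or justify spatial equicontinuity by another route.
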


In the following, we state our main results on the existence, uniqueness and stability of positive time periodic solution of $(\mathbf{P}_1)$-$(\mathbf{P}_3)$.

\begin{theorem}[Positive time periodic solution]\label{104}
Let $i=1,2,3$ be given.
	\begin{itemize}
		\item [$(1)$] Assume that ($\textbf{A}_0$)-($\textbf{A}_1$) and ($\textbf{A}_3$) hold. Then
		$(\mathbf{P}_i)$ has exactly one time periodic solution $u^i_P(\cdot, t)\in \hat{X}_i^{++}$. Moreover, $u^i_P(\cdot, t)$ is globally asymptotically stable in the sense that
$$\|u_i(\cdot, t; u_0)-u^i_P(\cdot, t)\|_{\hat{X}_i}\rightarrow 0, \quad  t\rightarrow \infty$$
 for any { $u_0\in\hat X_i^+$ satisfying  \eqref{new-added-eq1}}

	\item [$(2)$] In addition to conditions ($\textbf{A}_0$)-($\textbf{A}_1$) and ($\textbf{A}_4$),
assume that kernel functions $J(\cdot)$ and $G(\cdot)$ are symmetric with respect to $0$.
Then $(\mathbf{P}_i)$ has exactly one time periodic solution $U_i^*(\cdot, t)\in X_i^{++}$. Moreover, $U_i^*(\cdot, x)$ is globally asymptotically stable in the sense that
		$$\|u_i(\cdot, t; u_0)-U_i^*(\cdot, t)\|_{X_i}\rightarrow 0, \quad  t\rightarrow \infty$$
 for any  $u_0\in  X_i^+ \setminus \{0\}$ .

	\item [$(3)$] Assume that ($\textbf{A}_0$)-($\textbf{A}_1$) and   ($\textbf{A}_5$) hold.  Then $(\mathbf{P}_1)$ has exactly one spatially homogeneous positive time periodic solution $\phi_1^*(t)$. Moreover, $\phi_1^*(t)$ is globally asymptotically stable in the sense that for any $u_0\in \hat{X}_1$ with $\inf \limits_{x\in \mathbb{R}^N} u_0(x)>0$,
\begin{equation}\label{83}
\|u_1(\cdot, t; u_0)-\phi_1^*(t)\|_{\hat{X}_1}\rightarrow 0, \quad  t\rightarrow \infty.
\end{equation}
	\end{itemize}
\end{theorem}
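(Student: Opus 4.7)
The plan is to prove the three items with distinct techniques, all built on the uniform persistence and trapping supplied by Theorem~\ref{73}. The recurring difficulty in each case is the lack of compactness/regularity of the nonlocal semiflow flagged in the introduction, which rules out direct use of standard monotone-semiflow theorems and forces hands-on estimates.

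\textbf{Part (1).} The role of $(\textbf{A}_3)$ is to recover a usable \emph{comparison principle}: for two solutions $u\ge v$ of $(\mathbf{P}_i)$ both bounded by $M_i:=a_{iM}/b_{iL}$, a direct expansion gives
\begin{equation*}
(u-v)_t=\mathcal L_i[u-v]+\bigl[a_i-b_i(u+v)-c_i\,G_i\ast u\bigr](u-v)-c_i\,v\,G_i\ast(u-v).
\end{equation*}
Since $r_0>r_1$, the support of $G$ is contained in the region where $J\ge J_m$, so for $w:=u-v\ge 0$,
\begin{equation*}
\int J(y-x)w(y)\,dy-c_i(x,t)v(x,t)\,G_i\ast w(x,t)\ge\bigl[J_m-c_{iM}M_iG_M\bigr]\int_{B_{r_1}(x)\cap\Omega_i}w(y)\,dy\ge 0
\end{equation*}
by $(\textbf{A}_3)$, forcing $w_t\ge K(x,t)\,w$ with $K$ bounded, so $w\ge 0$ is preserved. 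With this, the period map $\Phi_i:u_0\mapsto u_i(\cdot,T;u_0)$ is monotone on $[0,M_i]\subset\hat X_i^+$. Theorem~\ref{73} supplies sub- and super-solutions $\underline U^i\le\overline U^i$ generating monotone iterations under $\Phi_i$ that converge pointwise to time-periodic solutions $u_*\le u^*$; applying the comparison principle once more to $u^*-u_*$ forces $u_*=u^*=:u^i_P$, and global stability follows because every admissible orbit is eventually trapped between $\underline U^i\pm\varepsilon$ and $\overline U^i\pm\varepsilon$. The hardest step is upgrading the pointwise monotone convergence to convergence in the $\hat X_i$ norm without compactness, which I would handle in the spirit of Rawal--Shen~\cite{Rawal2012}.

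\textbf{Part (2).} Here I would follow the \emph{part metric} argument of~\cite{Rawal2012}. For positive $u,v\in X_i$, set
\begin{equation*}
\rho(u,v)=\inf\bigl\{\ln\alpha:\alpha\ge 1,\ \alpha^{-1}v\le u\le\alpha v\bigr\}.
\end{equation*}
Trapping two positive orbits between $\underline U^i$ and $\overline U^i$ via Theorem~\ref{73} guarantees $\rho$ is finite and achieved at some $\alpha(t)\ge 1$, which is differentiable via a Danskin-type argument. Substituting the equations into $\dot\rho$ and using the symmetry of $J$ and $G$ to swap $y\leftrightarrow x$ in the nonlocal integrals reduces the sign of $\dot\rho$ to that of $h_1^i(x,t)+h_2^i(x,t)$, which is strictly negative by $(\textbf{A}_4)$. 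Hence $\rho$ strictly decays to $0$; taking both $u_0,v_0$ to be positive periodic solutions yields uniqueness of $U_i^*$, and taking $v_0=U_i^*(\cdot,0)$ yields global stability in $X_i$ for $u_0\in X_i^+\setminus\{0\}$. The delicate point is the rigorous differentiation of $\rho$ combined with the symmetric cancellation that produces $h_1^i+h_2^i$ as the sign-determining quantity.

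\textbf{Part (3).} Under $(\textbf{A}_5)$ the spatially homogeneous profile satisfies $\phi'=\phi\bigl(a(t)-(b(t)+c(t))\phi\bigr)$, a logistic ODE with positive $T$-periodic coefficients that admits a unique globally attracting positive periodic solution $\phi_1^*$. For a general solution set $\bar u(t)=\sup_x u_1(x,t;u_0)$ and $\underline u(t)=\inf_x u_1(x,t;u_0)$; testing the equation at approximate extrema and using $\int J=\int G=1$ yields the envelope inequalities
\begin{equation*}
\bar u'\le\bar u\bigl(a(t)-b(t)\bar u-c(t)\underline u\bigr),\qquad\underline u'\ge\underline u\bigl(a(t)-b(t)\underline u-c(t)\bar u\bigr).
\end{equation*}
Subtracting the logarithmic derivatives and invoking persistence (Theorem~\ref{73}) for uniform two-sided bounds,
\begin{equation*}
\frac{d}{dt}\ln\frac{\bar u}{\underline u}\le-(b(t)-c(t))(\bar u-\underline u)\le-(b_{1L}-c_{1M})(\bar u-\underline u),
\end{equation*}
so $b_{1L}>c_{1M}$ drives $\bar u-\underline u\to 0$. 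The solution thus becomes asymptotically spatially homogeneous, and comparing $\bar u$ and $\underline u$ with the logistic ODE via its period-$T$ contraction yields $\|u_1(\cdot,t;u_0)-\phi_1^*(t)\|_{\hat X_1}\to 0$. The only subtle step is justifying the envelope inequalities on $\mathbb R^N$ when the sup and inf are not attained, which I would handle by an $\varepsilon$-maximizer argument together with the standard viscosity-style reasoning used already in Theorem~\ref{98}.
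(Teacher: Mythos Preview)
Your proposal has genuine gaps in Parts (1) and (2).

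\textbf{Part (1).} Two problems. First, you invoke Theorem~\ref{73} to obtain $\underline U^i,\overline U^i$, but Theorem~\ref{73} requires $(\textbf{A}_2)$, which is \emph{not} among the hypotheses of Part~(1); $(\textbf{A}_3)$ does not imply $(\textbf{A}_2)$. The paper instead observes that under $(\textbf{A}_3)$ the constants $\bar u=C_{i0}$ and $\underline u=\epsilon_0$ are super- and sub-solutions of $(\mathbf{P}_i)$ in the sense of Definition~\ref{30}, and iterates the period map from these. Second, your uniqueness step (``applying the comparison principle once more to $u^*-u_*$ forces $u_*=u^*$'') is not an argument: comparison only gives $u_*\le u^*$. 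The paper proves $u^+=u^-$ by a different mechanism: if $u^+\not\equiv u^-$ then strict comparison gives $u^+>u^-$ everywhere, hence $p^+:=a_1-b_1u^+-c_1G_1*u^+<p^-$, so one can find $c^*>0$ with $p^+\le p^--c^*$; then $c_*u^-e^{-c^*t}$ is a super-solution of the \emph{linear} equation $u_t=\mathcal L_1[u]+up^+$, forcing $u^+\le c_*u^-e^{-c^*t}\to 0$, a contradiction with periodicity. Your outline is missing this idea entirely. (Continuity of $u^\pm$ in $x$ is also nontrivial; the paper handles it via a uniqueness argument for an auxiliary ODE in $t$ at each frozen $x$.)

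\textbf{Part (2).} The part-metric route is the wrong tool here. That argument, as in \cite{Rawal2012}, relies on order preservation of the flow, which is exactly what fails for $(\mathbf{P}_i)$ in the absence of $(\textbf{A}_3)$: under $(\textbf{A}_4)$ alone there is no comparison principle, so you cannot propagate $\alpha^{-1}v\le u\le\alpha v$ forward. Your Danskin-type pointwise computation of $\dot\rho$ also breaks down at the extremal point because the sign of $G_i*(u-v)$ is uncontrolled; the symmetry of $G$ helps only after integration, not pointwise. The paper uses an $L^2$ energy estimate instead: it applies Theorem~\ref{73} (legitimate here since $(\textbf{A}_4)$ includes $(\textbf{A}_2)$) to the pair $\underline U\le\overline U$, sets $W=\overline U-\underline U$, multiplies the $W$-equation by $W$ and integrates over $D_i$. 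Symmetry of $J$ and $G$ is used in integrated form to show $\int_{D_i}W\mathcal L_i[W]\,dx\le 0$ and $\int_{D_i}W\,G_i*W\,dx\le\int_{D_i}W^2\,dx$, whence
\[
\tfrac12\tfrac{d}{dt}\int_{D_i}W^2\,dx\le\int_{D_i}(h_1^i+h_2^i)W^2\,dx<0
\]
by $(\textbf{A}_4)$; periodicity of $W$ then forces $W\equiv 0$. This is where $h_1^i+h_2^i$ actually appears, not in a part-metric derivative.

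\textbf{Part (3).} Your argument is essentially the paper's, with one difference worth noting: rather than deriving envelope inequalities for $\sup_x u$ and $\inf_x u$ via approximate extrema (which, as you say, is delicate on $\mathbb R^N$), the paper lets $(\bar u(t),\underline v(t))$ solve the \emph{coupled ODE system}
\[
\bar u'=\bar u\,(a-b\bar u-c\underline v),\qquad \underline v'=\underline v\,(a-b\underline v-c\bar u)
\]
with $\bar u(0)=\sup u_0$, $\underline v(0)=\inf u_0$, and invokes Lemma~\ref{21} (pair of sup-/sub-solutions) to get $\underline v(t)\le u_1(x,t;u_0)\le\bar u(t)$ rigorously. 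The logarithmic contraction $\frac{d}{dt}\ln(\bar u/\underline v)\le -(b_{1L}-c_{1M})(\bar u-\underline v)$ then proceeds exactly as you wrote.
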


\begin{remark}
\label{periodic-solution-rk}
\begin{itemize}
\item[(1)]  By Theorem  \ref{104}, assumptions ($\textbf{A}_0$) and ($\textbf{A}_1$) together with ($\textbf{A}_3$) or ($\textbf{A}_4$)
imply that the persistence occurs in ($\mathbf{P_i}$), and  assumptions ($\textbf{A}_0$) and ($\textbf{A}_1$) together with ($\textbf{A}_5$)
imply that the persistence occurs in ($\mathbf{P_1}$).

\item[(2)] If $c_i(t,x)\equiv 0$, then \eqref{persistence-cond-eq} becomes
$$
a_i(t,x)-2 b_i(t,x)u_i^*(t,x)<0,\quad \forall\,\, t\in\R,\,\, x\in  { \Omega_i}.
$$
In view of Theorem \ref{73} and Remark \ref{A2-A3-rk} (3), however, we see that  \eqref{persistence-cond-eq} is not needed for Theorem \ref{104} (2) to hold in the case that $c_i(t,x)\equiv 0$.
\end{itemize}
\end{remark}

Before ending this section, we present several properties about the solutions of (\ref{92})-(\ref{94})  and a lemma which will be used in the proof of the main results. Let $i=1,2,3$  be given, for $u$, $v\in X_i$, we define
$$u\leq v (u \geq v) \ \text{if} \ v-u\in  X_i^+ (u-v\in  X_i^+),$$
and for $u$, $v\in \hat{X}_i$, we define
$$u\leq v (u \geq v) \ \text{if} \ v-u\in  \hat{X}_i^+ (u-v\in  \hat{X}_i^+).$$

Recall that $\hat{u}_1(x,t;u_0)$(resp. $\hat{u}_2(x,t;u_0)$, $\hat{u}_3(x,t;u_0)$) is the solution of (\ref{92})(resp. (\ref{93}), (\ref{94})) with $\hat{u}_1(\cdot, 0; u_0)=u_0(\cdot)\in \hat{X}_1$ (resp. $\hat{u}_2(\cdot, 0; u_0)=u_0(\cdot)\in \hat{X}_2$, $\hat{u}_3(\cdot, 0; u_0)=u_0(\cdot)\in \hat{X}_3$).
\begin{definition}\label{117}
  A continuous function $u(x, t)$ on $\Omega_1\times [0,\tau)$ is called a super-solution (sub-solution)
of (\ref{92}) if for any $x\in \R^N$, $u(x, t)$ is differentiable on $[0,\tau)$ and satisfies that for each $x\in \R^N$ and $t\in [0,\tau)$,
$$u_t\geq(\leq)\int_{\mathbb{R}^N}J(y-x)u(y, t)dy-u(x, t)+u(x, t)f_1(x, t, u).$$
\end{definition}
Similarly, we can define the super-solution and sub-solution of (\ref{93})-(\ref{94}).

\begin{proposition}[Comparison principle~\cite{Rawal2012,shen2010spreading}]\label{100}
Let $i=1,2,3$  be given.
  \begin{itemize}
    \item [$(1)$] If $u^1(x, t)$ and $u^2(x, t)$ are bounded sub- and super- solution of (\ref{92}) (resp. (\ref{93}), (\ref{94})) on $[0, \tau)$, respectively, and satisfy that $u^1(\cdot, 0)\leq u^2(\cdot, 0)$, then $u^1(\cdot, t)\leq u^2(\cdot, t)$ for $t\in [0, \tau)$.
    \item [$(2)$] For every $u_0 \in \hat{X}^+_i$, $\hat{u}_i(x,t;u_0)$ exists for all $t\geq 0$.
    \item [$(3)$] If $u^1$, $u^2\in \hat{X}_i^+$, $u^1\leq u^2$ and $u^1\not \equiv u^2$, then $\hat{u}_i(x, t;u^1)<\hat{u}_i(x, t;u^2)$ for $x\in \Omega_i$ and  $t>0$.
  \end{itemize}
\end{proposition}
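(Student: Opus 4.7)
The plan is to prove each of the three parts in turn. For \textbf{Part (1)}, I would set $w := u^2 - u^1$, subtract the sub-solution inequality from the super-solution one, and use the explicit form $f_i(x,t,u) = a_i(x,t) - b_i(x,t)u$ to obtain the linear differential inequality
$$
 w_t \geq \mathcal{L}_i[w] + c(x,t) w, \qquad c(x,t) := a_i(x,t) - b_i(x,t)(u^1 + u^2)(x,t),
$$
with $c$ bounded because $u^1,u^2$ are. The standard trick for avoiding sign-chasing is the perturbation $\tilde w := w + \ep e^{Lt}$ with $\ep > 0$ small and $L$ large; one checks that for $L$ large enough, $\tilde w$ inherits the same differential inequality while $\tilde w(\cdot,0) \geq \ep > 0$. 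Fixing $x$ and using the integrating factor $\exp(-\int_0^t(c(x,s)-1)ds)$ converts the inequality into a Volterra-type bound
$$
\tilde w(x,t) \geq \tilde w(x,0)\, e^{\int_0^t(c(x,s)-1)ds} + \int_0^t e^{\int_s^t(c(x,\sigma)-1)d\sigma} \int_{\Omega_i} J(y-x)\tilde w(y,s)\,dy\,ds,
$$
and a continuation argument on $T^\star := \sup\{t \in [0,\tau): \tilde w(\cdot,s) \geq 0 \ \forall s \leq t\}$ forces $T^\star = \tau$ (both summands are nonnegative on $[0,T^\star]$, and the first is bounded below by $\ep e^{-CT^\star}>0$). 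Letting $\ep \to 0^+$ recovers $w \geq 0$.

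For \textbf{Part (2)}, local existence in $\hat X_i$ is a routine Banach fixed-point argument applied to the mild formulation of $(\mathbf{P}_i)$, since $\mathcal{L}_i$ is a bounded linear operator on $\hat X_i$ and $u \mapsto u f_i(\cdot,\cdot,u)$ is locally Lipschitz on bounded balls. Global existence reduces to an $L^\infty$ a priori bound, obtained by comparison with the constant super-solution $v \equiv M$ for any $M \geq a_{iM}/b_{iL}$: a direct check gives $\mathcal{L}_i[M] \leq 0$ (with equality for $i=1,3$ and $\leq 0$ for $i=2$ using $j_{i,m} \leq 1$) and $M f_i(x,t,M) \leq 0$, so Part (1) applied to $u$ against $v \equiv M$, together with $v \equiv 0$ as sub-solution, yields $0 \leq u_i(\cdot,t;u_0) \leq \max(\|u_0\|_{\hat X_i}, a_{iM}/b_{iL})$ throughout the interval of existence, and a continuation argument extends the solution globally.

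For \textbf{Part (3)}, set $w := \hat u_i(\cdot,\cdot; u^2) - \hat u_i(\cdot,\cdot; u^1)$, which is nonnegative by Part (1); now $w$ satisfies $w_t = \mathcal{L}_i[w] + c(x,t) w$ as an \emph{equality}, so the variation-of-constants formula
$$
w(x,t) = w(x,0) e^{\int_0^t (c(x,s)-1)ds} + \int_0^t e^{\int_s^t(c(x,\sigma)-1)d\sigma} \int_{\Omega_i} J(y-x) w(y,s)\, dy\, ds
$$
holds with both summands nonnegative. By continuity, $w(\cdot,0) \not\equiv 0$ produces an open set $U \subset \Omega_i$ where $w(\cdot,0) > 0$, and the first summand then gives $w > 0$ on $U$ for all $t \geq 0$. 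The main obstacle, and the hardest step to execute cleanly, is propagating this positivity to every $x \in \Omega_i$: one iterates the convolution term in the second summand, using the support structure of $J$ in $(\textbf{A}_0)$ to show that each iteration enlarges the positive set by a ball of radius comparable to $r_0$, so that finitely many iterations reach any prescribed $x$ (after arbitrarily short positive time) and produce strict positivity. This is the technical heart of the result and is exactly where one follows \cite{Rawal2012, shen2010spreading} most closely.
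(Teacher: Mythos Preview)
The paper does not give its own proof of this proposition: it is stated with a citation to \cite{Rawal2012,shen2010spreading} and used as a black box. So there is no argument in the paper to compare against. Your sketch is correct and follows the standard route taken in those references; in particular, your Part~(1) perturbation/continuation argument is equivalent to (and slightly more streamlined than) the infimum-contradiction argument the paper uses later in Lemma~\ref{116} for a closely related linear inequality, and your Part~(3) propagation-of-positivity via iterated convolution is exactly the mechanism in the cited papers.

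One small caveat on Part~(2): your a~priori bound via the constant super-solution $v\equiv M$ with $M\ge a_{iM}/b_{iL}$ tacitly uses $b_{iL}>0$, which is part of assumption $(\textbf{A}_1)$, not $(\textbf{A}_0)$. The proposition as stated in the paper does not invoke $(\textbf{A}_1)$. The fix is easy: under $(\textbf{A}_0)$ alone one has $u f_i(x,t,u)\le a_{iM}u$ for $u\ge 0$, so comparison with the linear equation $v_t=\mathcal{L}_i[v]+a_{iM}v$ gives the exponential bound $\|u(\cdot,t)\|_{\hat X_i}\le e^{a_{iM}t}\|u_0\|_{\hat X_i}$, which is enough for the continuation argument.
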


\begin{lemma}\label{116}
 For $i=1,2,3$ be given. Assume that a continuous function $\Phi$ with
 $$\inf \limits_{x\in \Omega_i, t\geq 0} \Phi(x, t)>-\infty \ \text{and} \ \Phi(x, 0)\geq 0 \ \text{for} \ x \in \Omega_i$$
  satisfies
 \begin{equation}\label{114}
\Phi_t\geq\int_{\Omega_i}J(y-x)\Phi(y, t)dy+h_1(x, t)\Phi(x, t)+h_2(x, t)G_i \ast \Phi (x, t),
\end{equation}
 where both $h_1$ and $h_2$ are continuous functions, $h_1(x, t)>0$ for $(x, t)\in \Omega_i\times [0, +\infty)$ and $h_{1M}:=\sup \limits_{x\in \Omega_i, t\geq 0} h_1(x, t) <+\infty$.
 \begin{itemize}
   \item[$(1)$] If $h_2(x, t)<0$ for $(x, t)\in \Omega_i\times [0, +\infty)$, $h_{2m}:=\inf \limits_{x\in \Omega_i, t\geq 0} h_2(x, t)>-\infty$,  and for $r_0>r_1$,
   \begin{equation}\label{aa}
  J_m+G_Mh_{2m}>0,
   \end{equation}
       where $r_0$, $r_1$, $J_m$ and $G_M$ are defined in (\ref{HH}),
       then $\Phi(x, t)\geq 0$ for $(x, t)\in \Omega_i\times [0, +\infty)$.  Moreover, if $\Phi$ satisfies
        \begin{equation}\label{FF}
\Phi_t=\int_{\Omega_i}J(y-x)\Phi(y, t)dy+h_1(x, t)\Phi(x, t)+h_2(x, t)G_i \ast \Phi (x, t),
\end{equation}
 and   $\Phi(x, 0)\not \equiv  0 \ \text{for} \ x \in \Omega_i$, then  $\Phi(x, t)> 0$ for $(x, t)\in \Omega_i\times [0, +\infty)$.

   \item[$(2)$] If $h_2(x, t)\geq 0$ for $(x, t)\in \Omega_i\times [0, +\infty)$ and $\sup \limits_{x\in \Omega_i, t\geq 0} h_2(x, t) <+\infty$, then $\Phi(x, t)\geq 0$ for $(x, t)\in \Omega_i\times [0, +\infty)$.
 \end{itemize}
\end{lemma}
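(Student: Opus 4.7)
The plan is to treat both parts of the lemma by an exponential perturbation combined with a first-touchdown (maximum-principle) argument; the common core is the pointwise nonnegativity of the effective kernel $K_{x,t}(z):=J(z)+h_2(x,t)G(z)$.

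For $\delta>0$ and a constant $K>0$ to be chosen large, I would introduce $\Phi^\delta(x,t):=\Phi(x,t)+\delta e^{Kt}$. A direct substitution into \eqref{114} yields $(\Phi^\delta)_t\ge\int_{\Omega_i}J(y-\cdot)\Phi^\delta(y,t)\,dy+h_1\Phi^\delta+h_2\,G_i\ast\Phi^\delta+\delta e^{Kt}\bigl(K-j(\cdot)-h_1-h_2\,g(\cdot)\bigr)$, where $j(x)=\int_{\Omega_i}J(y-x)dy$ and $g(x)=\int_{\Omega_i}G(y-x)dy$. Boundedness of $j,g,h_1,h_2$ lets me pick $K$ so large that the bracket is at least $1$ everywhere, giving $\Phi^\delta$ a strict version of \eqref{114} with an extra $\delta e^{Kt}$ term on the right, and $\Phi^\delta(\cdot,0)\ge\delta>0$. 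Next I would verify $K_{x,t}(z)\ge 0$ pointwise in $(x,t,z)$. In case (1): on $B_{r_1}$, $J(z)\ge J_m$ and $G(z)\le G_M$ give $K_{x,t}(z)\ge J_m+h_{2m}G_M>0$ by \eqref{aa}; on $B_{r_0}\setminus B_{r_1}$, $\mathrm{supp}\,G\subset B_{r_1}$ gives $K_{x,t}(z)=J(z)\ge 0$; outside $B_{r_0}$ both $J$ and $G$ vanish. The hypothesis $r_0>r_1$ is precisely what makes the middle annular region do nothing harmful. In case (2), $h_2\ge 0$ and $G\ge 0$ make the bound immediate.

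Now I run the first-touchdown argument on $\Omega_i\times[0,T]$ for arbitrary $T>0$. Set $\tau:=\inf\{t\in[0,T]:\inf_{x\in\Omega_i}\Phi^\delta(x,t)\le 0\}$; by continuity and $\Phi^\delta(\cdot,0)\ge\delta$, $\tau>0$, and if $\tau<\infty$ then $\inf_x\Phi^\delta(x,\tau)=0$ while $\Phi^\delta(\cdot,t)\ge 0$ on $[0,\tau]$. For $i=2,3$ the compactness of $\bar\Omega$ produces an actual minimizer $x_0$, at which $(\Phi^\delta)_t(x_0,\tau)\le 0$ (space-time minimum) while by the previous step the right-hand side equals $\int K_{x_0,\tau}(y-x_0)\Phi^\delta(y,\tau)\,dy+h_1(x_0,\tau)\cdot 0+\delta e^{K\tau}\ge\delta e^{K\tau}>0$, a contradiction. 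For $i=1$ on $\mathbb{R}^N$ the infimum need not be attained, which is the main technical obstacle; I would resolve it by Ekeland's variational principle on $\mathbb{R}^N\times[0,T]$, producing for each $\varepsilon>0$ an approximate minimizer at which the same contradiction holds up to an $O(\varepsilon)$ error, then letting $\varepsilon\downarrow 0$. Hence $\Phi^\delta>0$ on $\Omega_i\times[0,\infty)$, and sending $\delta\downarrow 0$ yields $\Phi\ge 0$, finishing (2) and the nonnegativity statement of (1).

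For the strict-positivity addendum in (1), assume \eqref{FF} and $\Phi(\cdot,0)\not\equiv 0$, and suppose $\Phi(x_1,t_1)=0$ for some $t_1>0$. Since $\Phi\ge 0$, $(x_1,t_1)$ is a global minimum, so $\Phi_t(x_1,t_1)\le 0$; but \eqref{FF} gives $\Phi_t(x_1,t_1)=\int K_{x_1,t_1}(y-x_1)\Phi(y,t_1)\,dy\ge 0$. Both sides therefore vanish, and since the nonnegative integrand integrates to zero and $K_{x_1,t_1}>0$ on $B_{r_1}$, this forces $\Phi(\cdot,t_1)\equiv 0$ on $x_1+B_{r_1}$; iterating via chains of $r_1$-balls together with connectedness of $\Omega_i$ extends this to $\Phi(\cdot,t_1)\equiv 0$ on $\Omega_i$. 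Finally, \eqref{FF} is a linear ODE in $C_b(\Omega_i)$ with bounded generator $v\mapsto\int Jv+h_1v+h_2\,G_i\ast v$, so its evolution family is invertible; backward uniqueness then forces $\Phi(\cdot,0)\equiv 0$, contradicting the hypothesis and completing the proof.
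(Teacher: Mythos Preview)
Your proposal is correct in outline and shares the paper's core observation---the pointwise nonnegativity of the effective kernel $K_{x,t}(z)=J(z)+h_2(x,t)G(z)$, which is exactly what \eqref{aa} and $r_0>r_1$ are designed to guarantee---but the implementation of the nonnegativity step differs from the paper's.

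The paper does \emph{not} perturb to $\Phi^\delta$ and does not invoke Ekeland. Instead it works directly with $\Phi$ on a short time interval $[0,T_0)$ with $T_0=\frac{1}{1+h_{1M}}$: assuming $\Phi_{\inf}:=\inf_{\mathbb{R}^N\times[0,t^0]}\Phi<0$ for some $t^0<T_0$, it picks an approximating sequence $(x_n,t_n)$ with $\Phi(x_n,t_n)\to\Phi_{\inf}$, integrates \eqref{114} from $0$ to $t_n$, and uses kernel nonnegativity together with $h_1\le h_{1M}$ to bound the right side below by $t^0(1+h_{1M})\Phi_{\inf}$. Passing to the limit yields $\Phi_{\inf}\ge t^0(1+h_{1M})\Phi_{\inf}>\Phi_{\inf}$, a contradiction; one then iterates in time. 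This approach handles the non-compact case $i=1$ automatically via sequences, with no need for an approximate-minimizer principle. Your perturbation-plus-first-touchdown argument is a standard maximum-principle route and is correct for $i=2,3$; for $i=1$ it does require something like Ekeland (or doubling of variables), which you acknowledge but leave as a sketch. The paper's integral-form argument is more elementary and sidesteps that technicality entirely.

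For the strict-positivity addendum in (1), your argument (zero at a point forces vanishing on a ball by kernel positivity, then propagation, then backward uniqueness via boundedness of the generator) is essentially identical to the paper's.
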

\begin{proof}
  $(1)$ We only prove  the case $i=1$ because the other cases can be dealt with analogously. We first prove the first part of $(1)$. We claim that $\Phi(x, t)\geq 0$ holds on $(0,\ T_0)$, where  $T_0=\frac{1}{1+h_{1M}}$. If not, then there exist $x^1\in \mathbb{R}^N$ and $t^1\in (0, T_0)$ such that $\Phi(x^1, t^1)<0$, which implies that we can find $0<t^0<T_0$ such that
	$$\Phi_{inf}:=\inf \limits_{\mathbb{R}^N\times [0, t^0]} \Phi(x, t)<0.$$
	By the condition in this lemma, we have  $\Phi_{inf}>-\infty$. We can then extract two sequences $\{x_n\}\in \mathbb{R}^N$ and $\{t_n\}\in [0, t^0]$ such that
	$$\Phi(x_n, t_n)\rightarrow \Phi_{inf}, \quad n\rightarrow \infty.$$
	From (\ref{114}), we have
	\begin{align}\label{115}
	&\Phi(x_n, t_n)-\Phi(x_n, 0) \nonumber\\
&=\int_0^{t_n} \big{(}\int_{\mathbb{R}^N}J(y-x_n)\Phi(y, s)dy+h_1(x_n, s)\Phi(x_n, s)+h_2(x_n, s)G_1*\Phi(x_n, s)\big{)}ds \nonumber\\
	&\geq \int_0^{t_n} \big{[} \int_{\mathbb{R}^N}(J(y-x_n)+h_2(x_n, s)G(y-x_n)))\Phi(y, s)dy+h_{1M} \Phi_{inf}\big{]}ds \nonumber\\
	& \geq \int_0^{t_n} (1+h_{1M})\Phi_{inf}ds=t_n(1+h_{1M})\Phi_{inf} \nonumber\\
	& \geq t^0(1+h_{1M})\Phi_{inf}.
	\end{align}
	Since $\Phi(x, 0)\geq 0$, then letting $n\rightarrow \infty$ in (\ref{115}), we get
	$$\Phi_{inf}\geq t^0(1+h_{1M})\Phi_{inf}>\Phi_{inf},$$
	which leads to a contradiction. Thus, $\Phi(x, t)\geq 0$ for $t\in [0, T_0)$. By continuation, we obtain $\Phi(x, t)\geq 0$ for $(x, t)\in \mathbb{R}^N\times (0, \infty)$.

 Now we prove the second part. It follows from the above discussion that  $\Phi(x, t)\geq 0$ for $(x, t)\in \mathbb{R}^N\times (0, \infty)$.
   If there exist $\hat{t}>0$ and $\hat{x}_0\in \mathbb{R}^N$ such that $\Phi(\hat{x}_0, \hat{t})=0$, then using the fact that $\Phi(x, t)\geq 0$ for $(x, t)\in \mathbb{R}^N\times (0, \infty)$ and the continuously differential of $\Phi$ with respect to $t$, we obtain from (\ref{FF})
	\begin{equation}\label{113}
	\int_{\mathbb{R}^N}[J(y-\hat{x}_0)+h_2(\hat{x}_0,\hat{t})u_2(\hat{x}_0, \hat{t})G(y-\hat{x}_0)]{ \Phi(y, \hat{t})}dy=0.
	\end{equation}
	By assumptions $(A_0)$ and (\ref{aa}), it follows from (\ref{113}) that
	$$   \int_{B_{r_1}(\hat{x}_0)}[J(y-\hat{x}_0)+{ h_2(\hat{x}_0,\hat{t})}G(y-\hat{x}_0)]{ \Phi(y, \hat{t})}dy=0,$$
	and
	$$\int_{B_{r_0}(\hat{x}_0) \backslash B_{r_1}(\hat{x}_0)}J(y-\hat{x}_0) \Phi(y, \hat{t}) dy=0, $$
	which means that $\Phi(x, \hat{t})=0$ for $x\in B_{r_0}(\hat{x}_0)$.
	Similarly, we can choose any $\hat{x}_1\in \partial B_{\frac{r_1}{2}}(\hat{x}_0)$ such that $\Phi(x, \hat{t})=0$ for all $x\in B_{\frac{3}{2}r_1}(\hat{x}_1)$. Thus we have $\Phi(x, \hat{t})=0$ for all $x\in \mathbb{R}^N$. By general group theory,
		(\ref{FF}) has a unique solution $\tilde{u}(\cdot, t; u_0)\in \hat{X}_1$ with initial value $u_0\in \hat{X}_1$ for $t\in \mathbb{R}$. Therefore, $\Phi(x, t)=0$ for $(x, t)\in \mathbb{R}^N\times [0, \infty)$ which contradicts the condition $\Phi(x, 0)\not \equiv  0 \ \text{for} \ x \in \Omega_1$. Hence,  $\Phi(x, t)> 0$ for $(x, t)\in \mathbb{R}^N\times [0, +\infty)$.


  $(2)$ The proof of conclusion (1) is similar to that of $(1)$ and hence is omitted.
\end{proof}

\begin{remark}
\label{new-added-rk1}
{ By the arguments of Lemma  \ref{116}, if $\Phi(x,t)$ is not continuous in $x$, the conclusion
$\Phi(x,t)\ge 0$ in Lemma \ref{116}(1) (resp. in Lemma \ref{116}(2))  still holds.}
\end{remark}

 \section{Global existence and boundedness}\label{52}

 In this section, we study the global existence and boundedness of solutions of $(\mathbf{P}_i)$ with given initial functions
 in $\hat X_i$ and give a proof of Theorem \ref{98}.

 \begin{proof}[Proof of Theorem \ref{98} $(1)$]
 We only focus on the case $i=1$ because the other cases can be dealt with analogously.

   First, for given $t\in\R$ and $u\in\hat X_1$, define $Au$ and $F(t,u)$ as
 \begin{align*}
   &(Au)(x)=\int_{\mathbb{R}^N}J(y-x)u(y)dy-u(x),\\
   & F(t,u)(x)=u(x)[a_1(x,t)-b_1(x,t)u(x)-c_1(x,t)G_1*u(x)].
 \end{align*}
 It is clear that $-A$ is a linear bounded operator on $\hat X_1$. Hence $-A$ generates a uniformly continuous semigroup on $\hat X_1$.
 It is also clear that for given $t\in\R$ and $u\in \hat X_1$, $F(t,u)\in \hat X_1$. Moreover, it is not difficult to verify that
 the mapping $[\R\times \hat X_1\ni (t,u)\mapsto F(t,u)\in\hat X_1]$ is continuous in $t$ and locally Lipschitz continuous in $u\in\hat X_1$. Then by general semigroup theory (see \cite[Theorem 6.1.4, Corollary 4.2.6]{Pazy}),
for any $u_0\in \hat{X}_1$, there exists a $t_{\max} \leq \infty$ such that $(\mathbf{P}_1)$ has a unique  solution $u_1(x,t; u_0)\in \hat{X}_1$ on  $[0, t_{\max})$ satisfying $u_1(x,0;u_0)=u_0(x)$, and if $t_{\max}<\infty$, then
$$
\lim_{t\to t_{\max}-} \|u_1(\cdot,t;u_0)\|_{\hat{X}_1}=\infty.
$$

Next, we claim that, if initial value $u_0\in \hat{X}_1^+$, then  $u^*=u_1(\cdot, t; u_0)\in \hat{X}_1^+$ for all $t\in [0, t_{\max})$. In fact,
 set $u^*=u_1(x, t; u_0)$. Note that $u^*$ satisfies
    \begin{equation*}
    	\left\{
    	\begin{aligned}
    		& u^*_t=\int_{\mathbb{R}^N}J(y-x)u^*(y, t)dy-u^*(x, t)+u^*[a_1(x,t)-b_1(x,t)u^*-c_1(x,t)G_1\ast u^*], \\
    		&u^*(x,0)=u_0(x)\in \hat{X}^+_1.
    	\end{aligned}
    	\right.
    \end{equation*}
 Hence $u^*$ is a solution of
  \begin{equation*}
    \left\{
    \begin{aligned}
   &u_t=\int_{\mathbb{R}^N}J(y-x)u(y, t)dy-u(x, t)+u[a_1(x,t)-b_1(x,t)u^*-c_1(x,t)G_1\ast u^*], \\
   &u(x,0)=u_0(x)\in \hat{X}^+_1.
    \end{aligned}
    \right.
    \end{equation*}
Then it follows from Proposition \ref{100} that
 $u^*=u_1(\cdot, t;u_0)\ge 0$ for $t\in [0, t_{\max})$. In particular, if $u_0\in \hat{X}^+_1$ and $u_0 \not \equiv 0$, then $u_1(x, t; u_0)>0$ for $x\in \mathbb{R}^N$ and $t\in (0,t_{\max})$.

 { We now prove that, for any $u_0\in\hat X_1^+$,  $t_{\max}=\infty$.
Note that $u_1(\cdot, t;u_0)\ge 0$ for $t\ge 0$, then we have
\begin{align*}
  u^*_t &= \int_{\mathbb{R}^N}J(y-x)u^*(y, t)dy-u^*(x, t)+u^*[a_1(x,t)-b_1(x,t)u^*-c_1(x,t)G_1\ast u^*] \\
  &\leq \int_{\mathbb{R}^N}J(y-x)u^*(y, t)dy-u^*(x, t)+u^*[a_1(x,t)-b_1(x,t)u^*],
\end{align*}
which together with Definition \ref{117} implies that $u^*$ is a sub-solution of the following equation
\begin{equation}\label{1}
    \left\{
    \begin{aligned}
   & u_t=\int_{\mathbb{R}^N}J(y-x)u(y, t)dy-u(x, t)+u[a_1(x,t)-b_1(x,t)u], \\
   & u(x,0)=u_0(x)\in \hat{X}^+_1.
    \end{aligned}
    \right.
    \end{equation}
Obviously,  equation (\ref{1}) admits a solution which exists globally. This implies that $t_{\max}=\infty$.
 }

 {  Finally, if $(\mathbf{A}_1)$ holds,  as stated before, $u_1(t,\cdot;u_0)$ is a sub-solution of \eqref{1}.
It is then clear that for any $M_1>\max\{\|u_0\|_1, { \frac{a_{1M}}{b_{1L}}} \}$, we have
$u_1(x, t;u_0)\leq M_1$ for $x\in\R^N$ and $t\in\R^+$. The proof of Theorem \ref{98} $(1)$ is thus completed.}
\end{proof}

In order to prove Theorem \ref{98} $(2)$, we first prove the following lemma.

 \begin{lemma}\label{21}
For $i=1,2,3$, if $\overline{U}_i$ and $\underline{U}_i$ are a pair of  sup- and sub- solutions of $(\mathbf{P}_i)$ on ${\Omega_i}\times [0, \infty)$,  and $\overline{U}_i(x, 0)\geq \underline{U}_i(x, 0)$, then $\overline{U}_i(x,t)\geq \underline{U}_i(x,t)$ on ${ \Omega_i}\times (0,\infty)$.
\end{lemma}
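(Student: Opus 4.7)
The natural approach is to set $\Phi = \overline{U}_i - \underline{U}_i$, derive a linear differential inequality of the form treated by Lemma \ref{116}, and then apply a standard exponential shift to handle the (possibly negative) zeroth-order coefficient.

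First I would subtract the defining inequalities for the sub-solution from those of the sup-solution in Definition \ref{23} to obtain
\begin{equation*}
\Phi_t \;\geq\; \mathcal{L}_i[\Phi] + a_i(x,t)\Phi - b_i(x,t)\bigl(\overline{U}_i^{\,2} - \underline{U}_i^{\,2}\bigr) - c_i(x,t)\bigl[\,\overline{U}_i\,G_i\ast\underline{U}_i - \underline{U}_i\,G_i\ast\overline{U}_i\,\bigr].
\end{equation*}
The key algebraic identity is
\begin{equation*}
\overline{U}_i\,G_i\ast\underline{U}_i - \underline{U}_i\,G_i\ast\overline{U}_i
\;=\; \Phi\,(G_i\ast\underline{U}_i) - \underline{U}_i\,(G_i\ast\Phi),
\end{equation*}
which converts the cross term into a linear combination of $\Phi$ and $G_i\ast\Phi$. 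Combining with $\overline{U}_i^{\,2} - \underline{U}_i^{\,2} = (\overline{U}_i+\underline{U}_i)\Phi$ yields
\begin{equation*}
\Phi_t \;\geq\; \mathcal{L}_i[\Phi] + h_1(x,t)\,\Phi + h_2(x,t)\,G_i\ast\Phi,
\end{equation*}
where $h_2 = c_i\,\underline{U}_i \geq 0$ is bounded, and $h_1 = a_i - b_i(\overline{U}_i+\underline{U}_i) - c_i\,G_i\ast\underline{U}_i$ is bounded and continuous (not necessarily positive).

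Next I would rewrite $\mathcal{L}_i[\Phi]$ explicitly: for $i=1,2$ this gives $\int_{\Omega_i} J(y-x)\Phi(y,t)\,dy + (h_1 - 1)\Phi + h_2\,G_i\ast\Phi$; for $i=3$ one has $\int_{\Omega_i} J(y-x)\Phi(y,t)\,dy + \bigl(h_1 - \int_{\Omega_i} J(y-x)\,dy\bigr)\Phi + h_2\,G_i\ast\Phi$. To force the zeroth-order coefficient to be positive, introduce the shift $w = e^{Kt}\Phi$ with $K>0$ large. A direct computation shows $w$ satisfies
\begin{equation*}
w_t \;\geq\; \int_{\Omega_i} J(y-x)\,w(y,t)\,dy + \widetilde{h}_1(x,t)\,w + h_2(x,t)\,G_i\ast w,
\end{equation*}
with $\widetilde{h}_1 = h_1 - 1 + K$ (or $h_1 - \int J + K$ when $i=3$). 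Choosing $K$ larger than the uniform bound of the relevant quantity makes $\widetilde{h}_1 > 0$ and bounded on $\Omega_i \times [0,\infty)$.

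Finally, since $w$ is continuous, bounded (hence $\inf w > -\infty$), $w(\cdot,0) = \Phi(\cdot,0) \geq 0$ by hypothesis, $\widetilde{h}_1 > 0$ is bounded, and $h_2 \geq 0$ is bounded, all hypotheses of Lemma \ref{116}(2) are met. It follows that $w(x,t) \geq 0$, hence $\Phi(x,t) = e^{-Kt} w(x,t) \geq 0$ on $\Omega_i\times[0,\infty)$, as required. The main obstacle is purely bookkeeping: one must carefully verify that $h_1, h_2$ are genuinely bounded and continuous (this is immediate from the continuity and boundedness of $\overline{U}_i, \underline{U}_i$ built into Definition \ref{23}) and that the exponential shift preserves the structural form required by Lemma \ref{116}. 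No compactness or regularity beyond what is already built into the definition is needed.
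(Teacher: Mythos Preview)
Your proposal is correct and follows essentially the same route as the paper: set $\omega=e^{ht}(\overline{U}_i-\underline{U}_i)$, decompose the cross term $\overline{U}_i\,G_i\ast\underline{U}_i-\underline{U}_i\,G_i\ast\overline{U}_i$ linearly in $\omega$ and $G_i\ast\omega$, choose $h$ large enough to make the zeroth-order coefficient nonnegative, and invoke Lemma~\ref{116}(2). The only cosmetic difference is that the paper's decomposition produces $h_2=c_i\,\overline{U}_i$ rather than your $h_2=c_i\,\underline{U}_i$; both are nonnegative and bounded, so either works.
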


 \begin{proof}
 We only consider the case $i=1$ because the other cases can be dealt with similarly.
Since $\overline{U}_1$ and $\underline{U}_1$ are  a pair of sup- and sub- solutions of $(\mathbf{P}_1)$ on $\mathbb{R}^N\times [0, \infty)$,  then there is $M_0>0$ such that
$$0\le \overline{U}_1(x, t)< { M_0}, \ 0\le \underline{U}_1(x, t)< { M_0} \ \text{for} \ (x, t)\in \mathbb{R}^N\times [0, \infty).$$
  Setting $\omega=e^{ht}(\overline{U}_1-\underline{U}_1)$, where $h$ is positive constant that will  be determined later, then $\omega(x, 0)\geq 0$ and
\begin{align}\label{18}
 \omega_t & =he^{ht}(\overline{U}_1-\underline{U}_1)+ e^{ht}(\overline{U}_{1t}-\underline{U}_{1t}) \nonumber \\
  &= \int_{\mathbb{R}^N}J(y-x)\omega(y, t)dy  +
 p(x,t)\omega{+ c_1(x, t)\overline{U}_1G_1*\omega},
\end{align}
where
$$p(x, t)=h+a_1(x, t)-1-b_1(x, t)(\bar U_1+\underline {U}_1)-c_1(x, t){G_1*\underline {U}_1}.$$
Choose $h$ large enough such that $p(x, t)\geq 0$ for $(x, t)\in \mathbb{R}^N\times [0,\infty)$,
then it follows from Lemma \ref{116} $(2)$ that $\omega(x, t)\geq 0$ for $(x, t)\in \mathbb{R}^N\times (0, \infty)$ which implies $\overline{U}_1(x,t)\geq \underline{U}_1(x,t)$ for $(x, t)\in \mathbb{R}^N\times [0, \infty)$.
 \end{proof}

\begin{remark}
\label{new-added-rk2}
{ By Remark \ref{new-added-rk1} and the arguments of Lemma  \ref{21}, if $\overline{U}_i(x,t)$ and $\underline{U}_i(x,t)$ are not continuous
in $x$, the conclusion $\overline{U}_i(x,t)\ge\underline{U}_i(x,t)$ in Lemma \ref{21} still holds.}
\end{remark}

 Now we give a proof of Theorem \ref{98} $(2)$.

 \begin{proof}[Proof of Theorem \ref{98} $(2)$]
 We only consider the case $i=1$ because the other cases can be dealt with similarly.

 First, let $0\leq \underline{U}_1\leq \overline{U}_1\leq M_0$ and choose $\bar{M}$ large enough such that
  $$\min \limits_{0\leq \eta\leq M_0} (a_{1L}-2b_{1M}\eta+\bar{M})\geq 0,$$
  then for any $\eta\in [0, M_0]$ and $(x, t)\in \mathbb{R}^N\times [0,\infty)$, we have
  $a_1(x, t)-2b_1(x, t)\eta+\bar{M}\geq 0$.

   Next, we construct two sequences $\{\overline{U}_1^k\}$ and $\{\underline{U}_1^k\}$ with $\overline{U}_1^0=\overline{U}_1$ and $\underline{U}_1^0=\underline{U}_1$ such that for $(x, t)\in \mathbb{R}^N\times [0,\infty)$,
   \begin{equation}\label{20}
    \left\{
    \begin{aligned}
   &\overline{U}_{1t}^k-\mathcal{L}_1[\overline{U}_{1}^k]=\overline{U}_{1}^{k-1}[a_1(x, t)-b_1(x, t)\overline{U}_{1}^{k-1}]-c_1(x, t)\overline{U}_{1}^kG_1\ast \underline{U}_{1}^{k-1}-\bar{M}(\overline{U}_{1}^k-\overline{U}_{1}^{k-1}),\\
   &\underline{U}_{1t}^k-\mathcal{L}_1[\underline{U}_{1}^k]=\underline{U}_{1}^{k-1}[a_1(x, t)-b_1(x, t)\underline{U}_{1}^{k-1}]-c_1(x, t)\underline{U}_{1}^kG_1\ast \overline{U}_{1}^{k-1}-\bar{M}(\underline{U}_{1}^k-\underline{U}_{1}^{k-1}),\\
   &\overline{U}_{1}^k(x, 0)=u_0(x),\\
   &\underline{U}_{1}^k(x, 0)=u_0(x).
    \end{aligned}
    \right.
    \end{equation}

   Using the general semigroup theory, we know that for any given $(\overline{U}_{1}^{k-1},\underline{U}_{1}^{k-1})$ with $\overline{U}_{1}^{k-1}(x, t)$, $\underline{U}_{1}^{k-1}(x, t)$ being continuous in $t$ and $\overline{U}_{1}^{k-1}(\cdot, t)$, $\underline{U}_{1}^{k-1}(\cdot, t)\in \hat{X}_1$ for $t\geq 0$, equation (\ref{20}) has exactly one solution $(\overline{U}_{1}^k,\underline{U}_{1}^k)$,  which implies for each $k\geq 1$, the sequences $\{\overline{U}_{1}^k\}$ and $\{\underline{U}_{1}^k\}$ are well-defined, and $\overline{U}_{1}^k(x, t),\ \underline{U}_{1}^k(x, t)$ are continuous in $t$ and $\overline{U}_{1}^k(\cdot, t),\ \underline{U}_{1}^k(\cdot, t)\in \hat{X}_1$ for $t\geq0$.

Second, we claim  that $\{\overline{U}_{1}^k\}$ and $\{\underline{U}_{1}^k\}$ have the following ordered relationship for $(x, t)\in \mathbb{R}^N\times [0,\infty)$,
    \begin{equation}\label{eq-order}
    \underline{U}_{1}(x, t)\leq \underline{U}_{1}^k(x, t)\leq \underline{U}_{1}^{k+1}(x, t)\leq \cdots \leq \overline{U}_{1}^{k+1}(x, t)\leq \overline{U}_{1}^k(x, t)\leq \overline{U}_{1}(x, t).
    \end{equation}

    In fact,
    let $\bar{\omega}_1=\underline{U}_{1}(x, t)-\underline{U}_{1}^1(x, t)$, then $\bar{\omega}_1$ satisfies
    \begin{equation*}
    \left\{
    \begin{aligned}
   & \bar{\omega}_{1t}\leq \mathcal{L}_1[\bar{\omega}_{1}]-[c_1(x, t)G_1\ast \overline{U}_{1}+\bar{M}]\bar{\omega}_{1},\\
   & \bar{\omega}_{1}(x, 0)=\underline{U}_{1}(x, 0)-\underline{U}_{1}^1(x, 0)\leq 0.
    \end{aligned}
    \right.
    \end{equation*}
    By Proposition \ref{100}, we have $\bar{\omega}_{1}\leq 0$, namely, $\underline{U}_{1}(x, t)\leq \underline{U}_{1}^1(x, t)$ for $(x, t)\in \mathbb{R}^N\times [0,\infty)$. Similarly, we also have $\overline{U}_{1}^1(x, t)\leq \overline{U}_{1}(x, t)$. Let $\hat{\omega}_1=\underline{U}_{1}^1(x, t)-\overline{U}_{1}^1(x, t)$, then we have 
    \begin{align*}
\hat{\omega}_{1t} & =\mathcal{L}_1[\hat{\omega}_1]+\underline{U}_{1}[a_1(x, t)-b_1(x, t)\underline{U}_{1}]-\overline{U}_{1}[a_1(x, t)-b_1(x, t)\overline{U}_{1}] \\
&\qquad +\bar{M}(\underline{U}_{1}-\overline{U}_{1})-c_1(x, t)\overline{U}_{1}^1G_1\ast (\overline{U}_{1}-\underline{U}_{1})-[c_1(x, t)G_1\ast \overline{U}_{1}+\bar{M}]\hat{\omega}_1\\
  &=\mathcal{L}_1[\hat{\omega}_1]+(\bar{M}+a_1(x, t)-b_1(x, t)\eta_1)(\underline{U}_{1}-\overline{U}_{1})\\
 &\qquad +\bar{M}(\underline{U}_{1}-\overline{U}_{1})-c_1(x, t)\overline{U}_{1}^1G_1\ast (\overline{U}_{1}-\underline{U}_{1})-[c_1(x, t)G_1\ast \overline{U}_{1}+\bar{M}]\hat{\omega}_1,
\end{align*}
where $\eta_1= \underline{U}_{1}+\overline{U}_{1}$. It follows from $\bar{M}+a_1(x, t)-b_1(x, t)\eta_1\geq 0$ and $\underline{U}_{1}\leq \overline{U}_{1}$ that
\begin{equation*}
    \left\{
    \begin{aligned}
   & \hat{\omega}_{1t}\leq \mathcal{L}_1[\hat{\omega}_1]-[c_1(x, t)G_1\ast \overline{U}_{1}+\bar{M}]\hat{\omega}_{1}\\
   & \hat{\omega}_{1}(x, 0)=\underline{U}_{1}^1(x, 0)-\overline{U}_{1}^1(x, 0)=0\leq 0,
    \end{aligned}
    \right.
    \end{equation*}
which together with the comparison principle (see Proposition \ref{100}) implies that $\underline{U}_{1}^1\leq \overline{U}_{1}^1$, Hence we get  that
$$\underline{U}_{1}(x, t)\leq \underline{U}_{1}^1(x, t)\leq \overline{U}_{1}^1(x, t)\leq \overline{U}_{1}(x, t).$$
Moreover, 
by the choice of $\bar{M}$ and using the facts that $\underline{U}_{1}(x, t)\leq \underline{U}_{1}^1(x, t)$ and $\overline{U}_{1}^1(x, t)\leq \overline{U}_{1}(x, t)$, we have
\begin{align*}
&\qquad \underline{U}_{1t}^1-\mathcal{L}_1[\underline{U}_{1}^1] \\
&=\underline{U}_{1}(a_1(x, t)-b_1(x, t)\underline{U}_{1})-c_1(x, t)\underline{U}_{1}^1G_1\ast \overline{U}_{1}-\bar{M}(\underline{U}_{1}^1-\underline{U}_{1})\\
&\leq \underline{U}_{1}^1(a_1(x, t)-b_1(x, t)\underline{U}_{1}^1-c_1(x, t)G_1\ast \overline{U}_{1}^1)+(\bar{M}+a_1(x, t)-2b_1(x, t)\eta_2)(\underline{U}_{1}-\underline{U}_{1}^1)\\
&\leq \underline{U}_{1}^1(a_1(x, t)-b_1(x, t)\underline{U}_{1}^1-c_1(x, t)G_1\ast \overline{U}_{1}^1),
\end{align*}
and
\begin{align*}
&\qquad \overline{U}_{1t}^1-\mathcal{L}_1[\overline{U}_{1}^1] \\
&=\overline{U}_{1}(a_1(x, t)-b_1(x, t)\overline{U}_{1})-c_1(x, t)\overline{U}_{1}^1G_1\ast \underline{U}_{1}-\bar{M}(\overline{U}_{1}^1-\overline{U}_{1})\\
&\geq \overline{U}_{1}^1(a_1(x, t)-b_1(x, t)\overline{U}_{1}^1-c_1(x, t)G_1\ast \underline{U}_{1}^1)+(\bar{M}+a_1(x, t)-2b_1(x, t)\eta_3)(\overline{U}_{1}-\overline{U}_{1}^1)\\
&\geq \overline{U}_{1}^1(a_1(x, t)-b_1(x, t)\overline{U}_{1}^1-c_1(x, t)G_1\ast \underline{U}_{1}^1),
\end{align*}
where $\underline{U}_{1}\leq \eta_2 \leq \underline{U}_{1}^1$ and $\overline{U}_{1}^1\leq  \eta_3 \leq \overline{U}_{1}$, which means that $\overline{U}_{1}^1$ and $\underline{U}_{1}^1$ are also a pair of sup- and sub- solutions of $(\mathbf{P}_1)$.
By the above procedure, for any $k=2,3,\cdots$, we have
$$\underline{U}_{1}^k\leq \underline{U}_{1}^{k+1}\leq \overline{U}_{1}^{k+1}\leq \overline{U}_{1}^{k}.$$
So by induction, we obtain the ordered relationship (\ref{eq-order}).  

\smallskip

Now, by the monotonicity of $\{\overline{U}_{1}^k\}$ and $\{\underline{U}_{1}^k\}$, we have two functions {$\overline{u}_1$} and $\underline{u}_1$ such that
{ $\overline{u}_1(x,t)$ is upper semi-continuous and $\underline{u}_1(x,t)$  is lower semi-continuous}, $\overline{U}_{1}^k\rightarrow {\overline{u}_1}$ and $\underline{U}_{1}^k\rightarrow \underline{u}_1$ pointswise in $\mathbb{R}^N\times [0,\infty)$ as $k\rightarrow\infty$, and {  $\overline{u}_1\geq \underline{u}_1$}. Observe that
\begin{align*}
\overline{U}_1^k(x,t)=&u_0(x)+\int_0 ^t \mathcal{L}_1(\overline{U}_1^k)(x,s)ds+\int_0^ t \overline{U}_1^k[a_1(x,s)-b_1(x,s)\overline{U}_1^k-c_1(x,s)G\ast \underline{U}_1^k]ds\\
& -\int_0^t M[\overline{U}_1^k(x,s)-\overline{U}_1^{k-1}(x,s)]ds,
\end{align*}
and
\begin{align*}
\underline {U}_1^k(x,t)=&u_0(x)+\int_0 ^t \mathcal{L}_1(\underline {U}_1^k)(x,s)ds+\int_0^ t \underline {U}_1^k[a_1(x,s)-b_1(x,s)\underline {U}_1^k-c_1(x,s)G\ast \bar {U}_1^k]ds\\
& -\int_0^t M[\underline {U}_1^k(x,s)-\underline {U}_1^{k-1}(x,s)]ds,
\end{align*}
then by the Dominated Convergence Theorem, we have
\begin{equation}\label{new-add-eq-1}
    \left\{
    \begin{aligned}
   & \overline{u}_1(x,t)= u_0(x)+\int_0 ^t \mathcal{L}_1{ (\overline{u}_1)(x,s)}ds+\int_0^ t { \overline{u}_1}[a_1(x,s)-b_1(x,s)\overline{u}_1-c_1(x,s)G\ast \underline{u}_1]ds,\\
   & \underline {u}_1(x,t)=u_0(x)+\int_0 ^t \mathcal{L}_1(\underline {u}_1)(x,s)ds+\int_0^ t \underline {u}_1[a_1(x,s)-b_1(x,s)\underline {u}_1-c_1(x,s)G\ast \overline{u}_1]ds.
    \end{aligned}
    \right.
    \end{equation}
which implies that { $\overline{u}_1(x,t)$ and $\underline{u}_1(x,t)$ are continuous in $t$. }

{ Note that there is $K>0$ such that
\begin{equation}
\label{new-add-eq0}
|\p_t \overline{U}_1^k(x,t)|\le K,\quad |\p_t\underline{U}_1^k(x,t)|\le K\quad \forall\,\, x\in\R^N,\,\, t\ge 0,\,\, k\geq 1.
\end{equation}
Hence $\overline{U}_1^k(x,t)$ and $\underline{U}_1^k(x,t)$ are continuous in $t\ge 0$ uniformly with respect to $k\ge 1$ and $x\in\R^N$.
This implies that $\int_{\R^N}J(y-x)\overline{U}_1^k(y,t)dy$, $\int_{\R^N} J(y-x)\underline{U}_1^k(y,t)dy$, $\int_{\R^N}G(y-x)\overline{U}_1^k(y,t)dy$, and $\int_{\R^N} G(y-x)\underline{U}_1^k(y,t)dy$ are continuous in $t\ge 0$ uniformly with respect to
$k\ge 0$ and $x\in\R^N$. It then follows that $\int_{\R^N}J(y-x)\overline{u}_1(y,t)dy$, $\int_{\R^N} J(y-x)\underline{u}_1(y,t)dy$, $\int_{\R^N}G(y-x)\overline{u}_1(y,t)dy$, and $\int_{\R^N} G(y-x)\underline{u}_1(y,t)dy$ are continuous in $t$.
This together with \eqref{new-add-eq-1} implies that $\overline{u}_1(x,t)$ and $\underline{u}_1(x,t)$ are differentiable
in $t$ for $t\ge 0$ and}
\begin{equation*}
    \left\{
    \begin{aligned}
   & \overline{u}_{1t}-\mathcal{L}_1[\overline{u}_1]= \overline{u}_1[a_1(x,t)-b_1(x,t)\overline{u}_1-c_1(x,t)G\ast \underline{u}_1],\quad x\in\R^N\\
   & \underline{u}_{1t}-\mathcal{L}_1[\underline{u}_1]= \underline{u}_1[a_1(x,t)-b_1(x,t)\underline{u}_1-c_1(x,t)G\ast \overline{u}_1],\quad x\in\R^N\\
   &\overline{u}_1(x, 0)=u_0(x),\quad x\in\R^N\\
   &\underline{u}_2(x, 0)=u_0(x),\quad x\in\R^N.
    \end{aligned}
    \right.
    \end{equation*}
By Remark \ref{new-added-rk2}, we have
$\overline{u}_1\leq \underline{u}_1$ and then $u(x,t)=\overline{u}_1(x,t)=\underline{u}_1(x,t)$ is both upper and lower semi-continuous and
hence is continuous. So $\overline{u}_1=\underline{u}_1=u_1(x, t; u_0)$ is a solution of $(\mathbf{P}_1)$. The proof is completed.
\end{proof}
\begin{remark}
\begin{itemize}

\item [$(1)$] For { $i=1$}, if we can find a small $\epsilon_1>0$ and a large $\bar{M}_1>0$ such that for $(x, t)\in {\mathbb{R}^N}\times [0,\infty)$,
\begin{equation*}
    \left\{
    \begin{aligned}
   & a_1(x,t)-b_1(x,t)\bar{M}_1-c_1(x,t)\epsilon_1\leq 0,\\
   & a_1(x,t)-b_1(x,t)\epsilon_1-c_1(x,t)\bar{M}_i\geq 0,
    \end{aligned}
    \right.
    \end{equation*}
    then  $\epsilon_1$ and $\bar{M}_1$ are a pair of sub- and sup- solutions of $(\mathbf{P}_1)$. Hence,
    $$Q_{\epsilon_1}^{\bar{M}_1}:=[\epsilon_1, \bar{M}_1]=\big{\{}u\in \hat X_1: \epsilon_1\leq u(x)\leq \bar{M}_1 \big{\}}$$
    is an invariant region of $(\mathbf{P}_1)$.

 \item [$(2)$]  For { $i=2$},  if we can find a small $\epsilon_2>0$ and a large $\bar{M}_2>0$ such that for $(x, t)\in \Omega\times [0,\infty)$,
\begin{equation*}
    \left\{
    \begin{aligned}
   & \int_\Omega J(y-x) dy-1 + a_2(x,t)-b_2(x,t)\bar{M}_2-c_2(x,t)\epsilon_2 \int_\Omega G(y-x) dy\leq 0,\\
   & \int_\Omega J(y-x)dy-1+a_2(x,t)-b_2(x,t)\epsilon_2-c_2(x,t)\bar{M}_2\int_\Omega G(y-x) dy\geq 0,
    \end{aligned}
    \right.
    \end{equation*}
then $\epsilon_2$ and $\bar{M}_2$ are a pair of sub- and sup- solutions of $(\mathbf{P}_2)$. Hence,
$$Q_{\epsilon_2}^{\bar{M}_2}:=[\epsilon_2, \bar{M}_2]=\big{\{}u\in \hat X_2: \epsilon_2\leq u(x)\leq \bar{M}_2 \big{\}}$$
is an invariant region of $(\mathbf{P}_2)$.
\item [$(3)$]  For $i=3$,  if we can find a small $\epsilon_3>0$ and a large $\bar{M}_3>0$ such that for $(x, t)\in \Omega\times [0,\infty)$,
\begin{equation*}
    \left\{
    \begin{aligned}
   &   a_3(x,t)-b_3(x,t)M_3-c_3(x,t)\epsilon_3 \int_\Omega G(y-x) dy\leq 0,\\
   & a_3(x,t)-b_3(x,t)\epsilon_3-c_3(x,t)M_3\int_\Omega G(y-x) dy\geq 0,
    \end{aligned}
    \right.
    \end{equation*}
then  $\epsilon_3$ and $\bar{M}_3$ are a pair of sub- and sup- solutions of $(\mathbf{P}_3)$. Hence
$$Q_{\epsilon_3}^{\bar{M}_3}:=[\epsilon_3, \bar{M}_3]=\big{\{}u\in \hat X_3: \epsilon_3\leq u(x)\leq \bar{M}_3 \big{\}}$$
is an invariant region of $(\mathbf{P}_3)$.
\end{itemize}
\end{remark}

\section{Persistence}\label{77}
The aim of this section is to investigate the uniform persistence of $(\textbf{P}_i)$ and to give the proof of Theorem \ref{73}.

\begin{proof}[Proof of Theorem \ref{73}]
 We only consider the case $i=1$ because the other cases can be dealt with similarly. We shall prove this theorem in four steps.
	
\textbf{Step 1.} Let $\underline{u}_0=0$ and $\bar{u}^0=u_1^*(x, t)$. In this step, we will prove that the system
	\begin{equation}\label{40}
	\left\{
	\begin{aligned}
	& \underline{u}_{kt}=\mathcal{L}_1[\underline{u}_{k}]+\underline{u}_{k}[a_1(x, t)-b_1(x, t)\underline{u}_{k}-c_1(x, t)G_1*\bar{u}^{k-1}],\\
	&\bar{u}^{k}_t=\mathcal{L}_1[\bar{u}^{k}]+\bar{u}^{k}[a_1(x, t)-b_1(x, t)\bar{u}^{k}-c_1(x, t)G_1*\underline{u}_{k}]
	\end{aligned}
	\right.
	\end{equation}
for $k=1,2,\ldots$, has exactly one positive periodic solution $\{(\underline{u}_k,\bar{u}^k)\}_{k=1}^{\infty}$, and that
 \begin{equation}\label{48}
 0<\underline{u}_1\leq \underline{u}_2\leq \cdots \leq \underline{u}_k\leq \cdots \leq \bar{u}^k\leq \cdots \leq \bar{u}^2\leq \bar{u}^1\leq \bar{u}^0.
 \end{equation}

	First, since $a_1(t, x)-c_1(x, t)G_1*u_1^*(x, t)>0$ and $\bar{u}^0=u_1^*(x, t)$, then it follows from Lemma \ref{basic-lm} that
	$$u_t=\mathcal{L}_1[u]+u[a_1(x, t)-c_1(x, t)G_1*\bar{u}_1^*-b_1(x, t)u]$$
	has exactly one positive periodic solution and we  denote it  by $\underline{u}_1$.
	It follows from the comparison principle that $\underline{u}_1\leq \bar{u}_1^*$ and
$$a_1(t, x)-c_1(x, t)G_1*\underline{u}_1\geq a_1(t, x)-c_1(x, t)G_1*\bar{u}_1^*>0.$$
Thus
	$$u_t=\mathcal{L}_1[u]+u[a_1(x, t)-c_1(x, t)G_1*\underline{u}_1-b_1(x, t)u]$$
	has exactly one positive periodic solution and we  denote it  by $\bar{u}^1$.
	Meanwhile, by comparison principle we also have $\bar{u}^1\geq \underline{u}_1$. Thus
	$$\underline{u}_0<\underline{u}_1\leq \bar{u}^1 \leq \bar{u}^0.$$
	
	Next, suppose that
for any $k=1,2,\cdots, m$,  $\underline{u}_k$ is a unique positive periodic solution of
\begin{equation*}
u_t=\mathcal{L}_1[u]+u[a_1(x, t)-c_1(x, t)G_1*\bar{u}^{k-1}(x, t)-b_1(x, t)u],
\end{equation*}
	and $\bar{u}^k$ is a unique positive periodic solution of
\begin{equation*}
u_t=\mathcal{L}_1[u]+u[a_1(x, t)-c_1(x, t)G_1*\underline{u}_{k}(x, t)-b_1(x, t)u],
\end{equation*}
	and that
	\begin{equation}\label{41}
\underline{u}_0<\underline{u}_1\leq \cdots \leq \underline{u}_m\leq \bar{u}^m\leq \cdots \leq \bar{u}^1 \leq \bar{u}^0.
\end{equation}
Note that
 $$a_1(t, x)-c_1(x, t)G_1*\bar{u}^{m}\geq a_1(t, x)-c_1(x, t)G_1*\bar{u}_1^*>0.$$
 This implies that equation
 $$u_t=\mathcal{L}_1[u]+u[a_1(x, t)-c_1(x, t)G_1*\bar{u}^{m}(x, t)-b_1(x, t)u]$$
 has exactly one positive periodic solution, which are denoted by $\underline{u}_{m+1}$. By the comparison principle, we obtain $\underline{u}_{m+1}\leq u_1^*$ and $$a_1(t, x)-c_1(x, t)G_1*\underline{u}_{m+1}\geq a_1(t, x)-c_1(x, t)G_1*\bar{u}_1^*>0.$$
 Then it implies that
	$$u_t=\mathcal{L}_1[u]+u[a_1(x, t)-c_1(x, t)G_1*\underline{u}_{m+1}(x, t)-b_1(x, t)u]$$
	has exactly one positive periodic solution, which is denoted by  $\bar{u}^{m+1}$. 
Using comparison principle and the fact that $\underline{u}_{m}\leq \bar{u}^m$, we have
$\underline{u}_{m+1}\leq \bar{u}^{m+1}$.
 It follows from  (\ref{41}) and the first equation of (\ref{40}) that $\underline{u}_m\leq \underline{u}_{m+1}$, which together with the second equation of (\ref{40}) implies
 $\bar{u}^{m+1}\leq \bar{u}^{m}$. Hence by induction, we can obtain two function sequences $\{\underline{u}_n\}$ and $\{\bar{u}^n\}$ satisfying \eqref{48}.

\medskip

 \textbf{Step 2}. In this step, we will prove that for any $u_0\in \hat{X}_1$ satisfying $\inf \limits_{x\in \mathbb{R}^N} u_0(x)>0$, any $n\in \mathbb{N}$, and any $0<\varepsilon<\bar{H}_1$ with
 \begin{equation}\label{65}
 \bar{H}_1=\min \limits_{(x, t)\in \R^N \times [0, T]} \underline{u}_1(x, t),
 \end{equation}
  there exists an increasing sequence $\{t^n_{\varepsilon, u_0}\}^{\infty}_{n=0}$ with $t^0_{\varepsilon, u_0}=0$ such that
 \begin{equation}\label{42}
 \underline{u}_n-\varepsilon\leq u_1(x, t; u_0)\leq \bar{u}^{n-1}+\varepsilon,
 \end{equation}
 for all $x \in \R^N$ and $t\geq t^n_{\varepsilon, u_0}$.

 To this end, we first claim that there exists $t^1_{\varepsilon, u_0}$ such that
 \begin{equation}\label{43}
 \underline{u}_1-\varepsilon\leq u_1(x, t; u_0)\leq \bar{u}^0+\varepsilon
 \end{equation}
  for all $x \in \R^N$ and $t\geq t^1_{\varepsilon, u_0}$. Note that
  \begin{equation}\label{60}
 u_t\leq \mathcal{L}_1[u]+u[a_1(x,t)-b_1(x,t)u],
  \end{equation}
  then by \cite[Theorem E]{Rawal2012},  we can find $\hat{t}^1_{\varepsilon, u_0}>0$ such that
   \begin{equation}\label{44}
   u_1(x, t; u_0)\leq \bar{u}^0+\varepsilon \quad \text{for all}\ t\geq \hat{t}^1_{\varepsilon, u_0}.
   \end{equation}
   Choose $0<\bar{\varepsilon}<\varepsilon$ such that
   $a_1(x,t)-c_1(x,t)G_1\ast \bar{u}^0-c_1(x,t)\bar{\varepsilon}>0$, then from Lemma \ref{basic-lm}, we know that the equation
 $$u_t=\mathcal{L}_1[u]+u[a_1(x,t)-b_1(x,t)u-c_1(x,t)G_1\ast \bar{u}^0-c_1(x,t)\bar{\varepsilon}]$$
admits exactly one positive periodic solution,  which is denoted by $\hat{u}(x, t, \bar{\varepsilon}, \bar{u}^0)$.
 From (\ref{44}), we know that for $t\geq \hat{t}^1_{\bar{\varepsilon}, u_0}$, $u_1(x,t;u_0)$ satisfies that
   \begin{equation*}
   u_t\geq \mathcal{L}_1[u]+u[a_1(x,t)-b_1(x,t)u-c_1(x,t)G_1\ast \bar{u}^0-c_1(x,t)\bar{\varepsilon}].
   \end{equation*}
   Thus, by Lemma \ref{basic-lm}, there exists $t^1_{\bar{\varepsilon}, u_0}\geq \hat{t}^1_{\bar{\varepsilon}, u_0}$ such that
   $$u_1(x, t; u_0)\geq \hat{u}(x, t, \bar{\varepsilon}, \bar{u}^0)-\bar{\varepsilon} \quad \text{for all}\ t\geq t^1_{\bar{\varepsilon}, u_0}.$$
   Since $\hat{u}(x, t, \bar{\varepsilon}, \bar{u}^0)\rightarrow \underline{u}_1$ as $\bar{\varepsilon}\rightarrow 0$ and $\hat{u}(x, t, \bar{\varepsilon}, \bar{u}^0)\leq \underline{u}_1$, we can find a small enough $\bar{\varepsilon}$ such that
   \begin{equation*}
   \hat{u}(x, t, \bar{\varepsilon}, \bar{u}^0)-\bar{\varepsilon}\geq \underline{u}_1-\varepsilon,
   \end{equation*}
   and hence for this $\bar{\varepsilon}$,
   \begin{equation}\label{45}
   u_1(x, t; u_0)\geq \underline{u}_1-\varepsilon \quad \text{for all}\ t\geq t^1_{\bar{\varepsilon}, u_0}.
   \end{equation}
   Let $t^1_{\varepsilon, u_0}=\max \{ \hat{t}^1_{\varepsilon, u_0}, t^1_{\bar{\varepsilon}, u_0}\}(>0)$, then (\ref{43}) holds for $t\geq t^1_{\varepsilon, u_0}$.

   Next, assume that for any $0<\varepsilon<\bar{H}_1$
   there exist $t^k_{\varepsilon, u_0}\geq t^{k-1}_{\varepsilon, u_0}\geq \cdots\geq  t^2_{\varepsilon, u_0}\geq t^{1}_{\varepsilon, u_0}$ with $k\geq 2$ such that
   \begin{equation}\label{46}
   \underline{u}_{k}-\varepsilon\leq u_1(x, t; u_0)\leq \bar{u}^{k-1}+\varepsilon
   \end{equation}
   for all $x\in \R^N$ and $t\geq t^k_{\varepsilon, u_0}$. We shall prove that there exists 
   $t^{k+1}_{\varepsilon, u_0}\geq t^{k}_{\varepsilon, u_0}$ such that
   \begin{equation}\label{47}
   \underline{u}_{k+1}-\varepsilon\leq u_1(x, t; u_0)\leq \bar{u}^{k}+\varepsilon
   \end{equation}
    for all $x\in \R^N$ and $t\geq t^{k+1}_{\varepsilon, u_0}$.
     Choose $0<\bar{\varepsilon}<\varepsilon$ such that
    $$a_1(x,t)-c_1(x,t)G_1\ast \bar{u}^0-c_1(x,t)\bar{\varepsilon}>0$$ 
    and hence that
    $$a_1(x,t)-c_1(x,t)G_1\ast \bar{u}^{k}-c_1(x,t)\bar{\varepsilon}\geq a_1(x,t)-c_1(x,t)G_1\ast \bar{u}^0-c_1(x,t)\bar{\varepsilon}>0.$$
Then by Lemma \ref{basic-lm}, each of the following two equations
$$u_t=\mathcal{L}_1[u]+u[a_1(x,t)-b_1(x,t)u-c_1(x,t)G_1\ast \underline{u}_k+c_1(x,t)\bar{\varepsilon}]$$
and
$$u_t=\mathcal{L}_1[u]+u[a_1(x,t)-b_1(x,t)u-c_1(x,t)G_1\ast \bar{u}^{k}-c_1(x,t)\bar{\varepsilon}] $$
admits exactly one positive periodic solution, which are denoted by $\hat{u}(x, t, \bar{\varepsilon}, \underline{u}_k)$ and $\hat{u}(x, t, \bar{\varepsilon}, \bar{u}^{k})$, respectively.
It follows from (\ref{46}) that for $t\geq t^k_{\bar{\varepsilon}, u_0}$, $u_1(x,t;u_0)$ satisfies
    \begin{equation}\label{49}
  u_t\leq \mathcal{L}_1[u]+u[a_1(x,t)-b_1(x,t)u-c_1(x,t)G_1\ast \underline{u}_k+c_1(x,t)\bar{\varepsilon}].
    \end{equation}
Thus, there exists $t^k_{\bar{\varepsilon}, u_0, \underline{u}_k}\geq t^k_{\bar{\varepsilon}, u_0}$  such that
\begin{equation*}\label{51}
u_1(x, t; u_0)\leq \hat{u}(x, t, \bar{\varepsilon}, \underline{u}_k)+\bar{\varepsilon} \quad \text{for all}\ t\geq t^k_{\bar{\varepsilon}, u_0, \underline{u}_k}.
\end{equation*}
Since $\hat{u}(x, t, \bar{\varepsilon}, \underline{u}_k)\rightarrow \bar{u}^k$ as $\bar{\varepsilon}\rightarrow 0$ and $\hat{u}(x, t, \bar{\varepsilon}, \underline{u}_k)\geq \bar{u}^k$, we can find a small enough $\bar{\varepsilon}$ such that
$\hat{u}(x, t, \bar{\varepsilon}, \underline{u}_k)+\bar{\varepsilon}\leq \bar{u}^k+\varepsilon$,
which means
\begin{equation}\label{53}
u_1(x, t;u_0)\leq \bar{u}^k+\varepsilon \quad \text{for all}\ t\geq t^k_{\bar{\varepsilon}, u_0, \underline{u}_k}.
\end{equation}
It follows from (\ref{53}) that  there exists $ t^k_{\bar{\varepsilon}, u_0, \bar{u}^k}\geq  t^k_{\bar{\varepsilon}, u_0, \underline{u}_k}$ such that for $t\geq t^k_{\bar{\varepsilon}, u_0, \bar{u}^k}$, $u_1(x,t;u_0)$ satisfies
\begin{equation}\label{54}
u_t\geq \mathcal{L}_1[u]+u[a_1(x,t)-b_1(x,t)u-c_1(x,t)G_1\ast \bar{u}^{k}-c_1(x,t)\bar{\varepsilon}].
\end{equation}
Thus from (\ref{54}) and by Lemma \ref{basic-lm}, we can find $\hat{t}^k_{\bar{\varepsilon}, u_0, \bar{u}^k}\geq t^k_{\bar{\varepsilon}, u_0, \bar{u}^k}$ such that
\begin{equation*}\label{55}
u_1(x, t; u_0)\geq \hat{u}(x, t, \bar{\varepsilon}, \bar{u}^{k})-\bar{\varepsilon} \quad \text{for all}\ t\geq \hat{t}^k_{\bar{\varepsilon}, u_0, \bar{u}^k}.
\end{equation*}
Since $\hat{u}(x, t, \bar{\varepsilon}, \bar{u}^{k})\rightarrow \underline{u}_{k+1}$ as $\bar{\varepsilon}\rightarrow 0$ and $\hat{u}(x, t, \bar{\varepsilon}, \bar{u}^{k})\leq \underline{u}_{k+1}$, there exists a small enough $\bar{\varepsilon}$ such that
$$\hat{u}(x, t, \bar{\varepsilon}, \bar{u}^{k})-\bar{\varepsilon}\geq \underline{u}_{k+1}-\varepsilon,$$
which means
\begin{equation}\label{56}
u_1(x, t; u_0)\geq \underline{u}_{k+1}-\varepsilon \quad \text{for all}\ t\geq \hat{t}^k_{\bar{\varepsilon}, u_0, \bar{u}^k}.
\end{equation}
Let $t^{k+1}_{\varepsilon, u_0}=\max\{ t^k_{\bar{\varepsilon}, u_0, \underline{u}_k},\ \hat{t}^k_{\bar{\varepsilon}, u_0, \bar{u}^k}\}(\geq t^k_{\varepsilon, u_0})$,  then (\ref{47}) holds for $t\geq t^{k+1}_{\varepsilon, u_0}$. Therefore by induction, (\ref{42}) holds for any $n\geq 1$.

\medskip

\textbf{Step 3.} In this step, we prove that for any given $n\in \mathbb{N}$ and $n\geq 1$, $u_0\in \hat{X}_i$ with $\inf \limits_{x\in \mathbb{R}^N} u_0(x)>0$, if
\begin{equation}\label{57}
\underline{u}_n\leq u_0\leq \bar{u}^{n-1},
\end{equation}
then
\begin{equation}\label{58}
\underline{u}_n\leq u_1(x, t; u_0)\leq \bar{u}^{n-1} \quad \text{for all} \  t\geq 0
\end{equation}

 To this end,  we first prove
\begin{equation}\label{59}
\underline{u}_1\leq u_1(x, t; u_0)\leq \bar{u}^0 \quad \text{for all} \  t\geq 0.
\end{equation}
It follows from (\ref{60}) and $u_0\leq \bar{u}^{n-1}\leq \bar{u}^0$ that
\begin{equation}\label{61}
u_1(x, t; u_0)\leq \bar{u}^0, \quad t\geq 0.
\end{equation}
Then $u_1(x,t;u_0)$ satisfies
$$u_t\geq \mathcal{L}_1[u]+u[a_1(x, t)-c_1(x, t)G_1*\bar{u}^0-b_1(x, t)u].$$
 for $t\ge 0$. By comparison principle and $\underline{u}_1\leq \underline{u}_n\leq u_0$, we have $\underline{u}_1\leq u_1(x, t; u_0)$ for $t\geq 0$, which together with (\ref{61}) implies (\ref{59}).

Next, suppose that $\underline{u}_k\leq u_1(x, t; u_0)\leq \bar{u}^{k-1}$ for $k=2,3,\cdots, j$  $(j\leq n-1)$. We shall prove that
\begin{equation}\label{62}
\underline{u}_{j+1}\leq u_1(x, t; t_0,u_0)\leq \bar{u}^{j}.
\end{equation}
Since $u_1(x,t;u_0)$ satisfies
 $$u_t\leq \mathcal{L}_1[u]+u[a_1(x, t)-c_1(x, t)G_1*\underline{u}_j(x, t)-b_1(x, t)u]$$
   for $t\ge 0$, and $u_0\leq \bar{u}^{n-1}\leq \bar{u}^{j}$, we obtain
\begin{equation}\label{63}
u_1(x, t; u_0)\leq \bar{u}^j \quad t\geq 0.
\end{equation}
Meanwhile, we have $\underline{u}_{j+1}\leq \underline{u}_{n}\leq u_0$ and $u_1(x,t;u_0)$satisfies
$$u_t\geq \mathcal{L}_1[u]+u[a_1(x, t)-c_1(x, t)G_1*\bar{u}^j(x, t)-b_1(x, t)u],$$
 for $t\ge 0$, which implies
\begin{equation}\label{64}
u_1(x, t; u_0)\geq \underline{u}_{j+1}, \quad t\geq 0.
\end{equation}
Thus (\ref{63}) and (\ref{64}) imply (\ref{62}). By induction, we get (\ref{58}).

\medskip

\textbf{Step 4.} From Step 1, we know that $\{\underline{u}_n\}$ is  a non-decreasing bounded function sequence and $\{\bar{u}^n\}$ is a non-increasing bounded function sequence. Thus $\lim \limits_{n\rightarrow \infty} \underline{u}_n$ and $\lim \limits_{n\rightarrow \infty} \bar{u}^n$ exist. Let
\begin{equation}\label{71}
 \underline{U}^1(x,t)=\lim \limits_{n\rightarrow \infty} \underline{u}_n(x,t), \quad \overline{U}^1(x,t)=\lim \limits_{n\rightarrow \infty} \bar{u}^n(x,t),
\end{equation}
then $\underline{U}^1$ and $\overline{U}^1$ are bounded positive periodic functions. In this step, we shall prove that $\underline{U}^1$ and $\bar{U}^1$
satisfy the properties stated in Theorem \ref{73}.

First, it follows from (\ref{40}) that
	\begin{align}\label{68}
	\bar{u}^n(x, t)&=\bar{u}^n(x, 0)+\int_0^{t} { \int_{\R^N} J(y-x)(y, s)dy}+
\bar{u}^n(x, s) \left({ -1+a_1(x, s)} \right. \nonumber \\
&\qquad \left.-b_1(x, s)\bar{u}^n(x, s)-c_1(x, s)G_1*\underline{u}_n(x, s)\right) ds.
	\end{align}
Letting $n\rightarrow \infty$ in (\ref{68}) and using the Dominated Convergence Theorem, we obtain
	 \begin{align}
\label{new-add-eq1}
\overline{U}^1(x, t)&=\overline{U}^1(x, 0)+\int_0^{t}{ \int_{\R^N}J(y-x)(y, s)dy}+
\overline{U}^1(x, s)\left ({ -1+a_1(x, s)}\right. \nonumber \\
&\qquad \left.-b_1(x, s)\overline {U}^1(x, s)-c_1(x, s)G_1*\underline{U}^1(x, s)\right)ds.
\end{align}
{ Similarly,
 \begin{align}
\label{new-add-eq1-1}
\underline{U}^1(x, t)&=\underline{U}^1(x, 0)+\int_0^{t}{ \int_{\R^N}J(y-x)(y, s)dy}+
\underline{U}^1(x, s)\left ({ -1+a_1(x, s)}\right. \nonumber \\
&\qquad \left.-b_1(x, s)\underline {U}^1(x, s)-c_1(x, s)G_1*\overline{U}^1(x, s)\right)ds.
\end{align}
Then by the arguments similar to those in the proof of the continuity and differentiability of $\underline{u}_1(x,t)$ and $\overline{u}_1(x,t)$ in $t$ in
Theorem \ref{98} $(2)$,  we have that $\overline{U}^1(x, t)$ and $\underline{U}_1(x,t)$  are continuous as well as differentiable  in $t$.
 Moreover, we have
\begin{equation}\label{69}
\begin{cases}
\overline{U}^1_t(x,t)=\mathcal{L}_1[\overline{U}^1](x,t)+\overline{U}^1(x,t)[a_1(x,t)-b_1(x,t)\overline{U}^1(x,t)-c_1(x,t)\big(G_1\ast \underline{U}^1\big)(x,t)]\cr
\underline{U}^1_t(x,t)=\mathcal{L}_1[\underline{U}^1](x,t)+\underline{U}^1(x,t)[a_1(x,t)-b_1(x,t)\underline{U}^1(x,t)-c_1(x,t)\big(G_1\ast \overline{U}^1\big)(x,t)]
\end{cases}
\end{equation}
for  $t\in\R$.}

Let $\tilde{a}_1(x,t)=a_1(x,t)-c_1(x,t)\big(G_1\ast \underline{U}^1\big)(x,t)(>0)$. { Then $\tilde{a}_1(x,t)$ is continuous in $x\in\R^N$ and $t\in\R$,} and $\overline{U}^1$ is a positive periodic solution of
\begin{equation}\label{70}
u_t=\mathcal{L}_1[u]+u[\tilde{a}_1(x,t)-b_1(x, t)u].
\end{equation}
 By \cite[Theorem E]{Rawal2012}, (\ref{70}) has exactly one positive periodic solution, which is continuous in $x$ and is asymptotically stable with respect to strictly positive perturbations. We denote it by $\bar U^*$. By \cite[Theorem E]{Rawal2012} and \cite[Proposition 3.1]{shen2012traveling}, we have $\bar U^1=\bar U^*$ and thus
   $\overline{U}^1$ is continuous in $x$. Similarly, $\underline{U}_1$ is also continuous in $x$.

Next we prove (\ref{66}). { By the continuity and periodicity of $\overline{U}_1(x,t)$ and $\underline{U}_1(x,t)$ in $x\in\R^N$ and $t\in\R$, the convergence in \eqref{71} is uniform with respect to $x\in\R^N$ and $t\in\R$.} It then  follows from (\ref{42}) and (\ref{71}) that for any $\varepsilon>0$, there exists $N$ such that
$$0<\underline{U}^1-2\varepsilon\leq \underline{u}_N-\varepsilon\leq u_1(x, t; u_0)\leq \bar{u}_{N-1}+\varepsilon\leq \overline{U}^1+\varepsilon$$
for all $x \in B_1$ and $t\geq t^N_{\varepsilon, u_0}$ which implies that (\ref{66}) holds.

Now if $\underline{U}^1\leq u_0\leq \overline{U}^1$, then it follows from (\ref{48}) that
$$\underline{u}_n\leq \underline{U}^1\leq u_0\leq \overline{U}^1\leq \bar{u}^{n-1}$$
 for $n\in \mathbb{N}$.
By the conclusion in Step 2, we have
$$\underline{u}_n\leq u_1(x, t; u_0)\leq \bar{u}^{n-1}, \quad \text{for all}\ n\in \mathbb{N}, \,\, t\geq 0.$$
Letting $n\rightarrow \infty$, we obtain
$$\underline{U}^1\leq u_1(x, t; u_0)\leq \overline{U}^1.$$
 Theorem \ref{73} is thus proved.
\end{proof}

\begin{remark}
\begin{itemize}
	\item [$(1)$]
 Since for $i=1,2,3$,  $\underline{U}^i$ and $\overline{U}^i$ satisfy $\overline{U}^i\geq \underline{U}^i$ and
 \begin{equation*}
 \left\{
 \begin{aligned}
 & \underline{U}^i_{t}=\mathcal{L}_i[\underline{U}^i]+\underline{U}^i[a_i(x, t)-b_i(x, t)\underline{U}^i-c_i(x, t)G_i*\overline{U}^i],\\
 &\overline{U}^i_t=\mathcal{L}_i[\overline{U}^i]+\overline{U}^i[a_i(x, t)-b_i(x, t)\overline{U}^i-c_i(x, t)G_i*\underline{U}^i],
 \end{aligned}
 \right.
 \end{equation*}
then  $\underline{U}^i$ and $\overline{U}^i$ can be viewed as a pair of  sub-solution and sup-solution of $(\mathbf{P}_i)$. Therefore the last part of Theorem \ref{73} also can be obtained by Theorem \ref{98}(2).

 \item[$(2)$] Similar arguments as those in the proof of Theorem  \ref{73}  are first established in the proof of \cite[Theorem 1.2]{SaSh}.
     \end{itemize}	
\end{remark}

\section{Existence, uniqueness, and stability of positive time periodic solutions}\label{76}

 In this section, we study  the existence, uniqueness  and stability of positive time periodic solutions for $(\mathbf{P}_1)$-$(\mathbf{P}_3)$,
 and prove Theorem \ref{104}.

 \subsection{Proof of  Theorem \ref{104} (1)}

 In this subsection, we prove Theorem \ref{104} (1).

  First, we give a definition of sup- and sub- solutions of $(\mathbf{P}_i)$, $i=1,2,3$ in a standard way.
  \begin{definition}\label{30}
	For $i=1,2,3$, a non-negative bounded continuous function $u(x, t)$ on ${ \Omega_i}\times [0,  \infty)$ is said to be a sup-solution (or sub-solution) of $(\mathbf{P}_i)$ if
	$\frac{\partial u}{\partial t}$ exists and is continuous on ${ \Omega_i}\times [0,  \infty)$,
and
  \begin{equation*}
		\begin{aligned}
		& u_t(x,t)-\mathcal{L}_i[u](x,t)\geq (\leq) u(x,t)[a_i(x,t)-b_i(x,t)u(x,t)-c_i(x,t)\big(G_i\ast u\big)(x,t)]
		\end{aligned}
		\end{equation*}
		for $(x, t)\in { \Omega_i}\times [0, \infty)$.
\end{definition}

For $i=1,2,3$, let
   $$\check{C}_i:=\{C: J(x)\geq c_{iM}CG(x), \ { x\in B_{r_1}}\}, \quad C_{i0}=\max \check{C}_i.$$
Under the assumptions $(\textbf{A}_0)$ and $(\textbf{A}_3)$, $C_{i0}<+\infty$, $\check{C}_i\neq \emptyset$, and $C_{i0}> \frac{a_{iM}}{b_{iL}}$.

\begin{lemma}\label{31}
	 Assume that assumption ($\textbf{A}_3$) holds. For $i=1,2,3$, if $u_1$ and $u_2$ are  sup- and sub- solutions of $(\mathbf{P}_i)$ 
on ${ \Omega_i}\times [0, \infty)$, respectively, and satisfy
	$$ u_1(x, 0)\geq u_2(x, 0) \quad x\in { \Omega_i},$$
{ and
$$
u_2\le C_{i0}\quad {\rm or}\quad u_1\le C_{i0},
$$
then}  $u_1(x, t)\geq u_2(x, t)$ in ${ \Omega_i}\times (0, \infty)$.
\end{lemma}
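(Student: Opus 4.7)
The plan is to mimic the comparison arguments of Lemma \ref{21} and Lemma \ref{116}, but with the crucial twist that the convolution term $c_i u_j\, G_i \ast u_j$ is split so that the factor sitting \emph{outside} the convolution is the one known to lie below $C_{i0}$. Setting $w = u_1 - u_2$ and subtracting the sup-/sub-solution inequalities of Definition \ref{30}, I would use the identity
\begin{equation*}
u_1\, G_i\!\ast\! u_1 - u_2\, G_i\!\ast\! u_2 \;=\; w\,(G_i\!\ast\! u_j) \;+\; u_k\,(G_i\!\ast\! w),
\end{equation*}
choosing $(j,k)=(1,2)$ when $u_2\le C_{i0}$ and $(j,k)=(2,1)$ when $u_1\le C_{i0}$. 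Writing $v$ for the chosen factor $u_k$, this produces
\begin{equation*}
w_t - \mathcal{L}_i[w] \;\ge\; h_1(x,t)\, w(x,t) \;-\; c_i(x,t)\,v(x,t)\,(G_i\!\ast\! w)(x,t),
\end{equation*}
where $h_1(x,t) = a_i - b_i(u_1+u_2) - c_i\,G_i\!\ast\! u_j$ is bounded and continuous on $\Omega_i\times[0,\infty)$ by the boundedness of $u_1$, $u_2$.

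Next I would combine the nonlocal diffusion integral with the negative convolution term into a single kernel: for $i=1,2$,
\begin{equation*}
\int_{\Omega_i} J(y-x)\,w(y,t)\,dy \;-\; c_i(x,t)v(x,t)\!\int_{\Omega_i} G(y-x)\,w(y,t)\,dy \;=\; \int_{\Omega_i} K(x,y,t)\,w(y,t)\,dy,
\end{equation*}
with $K(x,y,t) = J(y-x) - c_i(x,t)v(x,t)G(y-x)$, and similarly for $i=3$ where $\mathcal{L}_3$ contributes the additional pointwise term $-\,u(x,t)\int_\Omega J(y-x)\,dy$. The central observation is that $K\ge 0$: for $y-x\notin B_{r_1}$ one has $G(y-x)=0$, so $K=J\ge 0$; for $y-x\in B_{r_1}$, since $c_i(x,t)v(x,t)\le c_{iM}C_{i0}$ and $C_{i0}$ is, by definition, the largest constant for which $J(z)\ge c_{iM}C_{i0}G(z)$ on $B_{r_1}$, the kernel is again non-negative. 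This is precisely where assumption $(\textbf{A}_3)$ enters, since $(\textbf{A}_3)$ and $r_0>r_1$ guarantee $C_{i0}\ge a_{iM}/b_{iL}$ is well defined and positive.

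Finally, I would apply the standard $e^{ht}$ trick of Lemma \ref{21}: let $\omega(x,t)=e^{ht}w(x,t)$ and choose $h>0$ large enough so that the coefficient of $\omega$ arising from $h_1$ and the diagonal part of $\mathcal{L}_i$ is non-negative uniformly on $\Omega_i\times[0,\infty)$. Then $\omega(x,0)\ge 0$ and
\begin{equation*}
\omega_t(x,t) \;\ge\; \int_{\Omega_i} K(x,y,t)\,\omega(y,t)\,dy \;+\; p(x,t)\,\omega(x,t),
\end{equation*}
with both $K\ge 0$ and $p\ge 0$. Running the contradiction-on-a-short-interval argument from the proof of Lemma \ref{116}(1) (with the modification noted in Remark \ref{new-added-rk1}, since $w$ need not be continuous in $x$ a priori) forces $\omega\ge 0$, hence $w\ge 0$, on each interval of length $1/(1+\|p\|_\infty)$, and the conclusion propagates to all of $[0,\infty)$ by continuation.

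The main obstacle is the asymmetry of the ``either-or'' hypothesis: one has to commit to a splitting of the nonlinear term \emph{before} running the comparison, and then verify that the resulting kernel is non-negative using only the bound on the one factor that happens to be controlled. A subsidiary nuisance is the Neumann-type operator $\mathcal{L}_3$, whose diagonal coefficient $\int_\Omega J(y-x)\,dy$ is $x$-dependent but uniformly bounded, so it can be absorbed into the choice of $h$ without difficulty.
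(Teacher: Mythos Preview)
Your proof is correct and follows essentially the same route as the paper's: both split the bilinear term so that the factor known to be bounded by $C_{i0}$ is the one multiplying $G_i\!\ast\!w$, then apply the $e^{ht}$ trick and use the pointwise domination $J(z)\ge c_{iM}C_{i0}\,G(z)$ on $B_{r_1}$ (together with $G\equiv 0$ outside $B_{r_1}$) to make the resulting combined kernel non-negative before running the short-interval infimum argument of Lemma~\ref{116}. The only presentational difference is that the paper keeps the $J$- and $G$-integrals separate and invokes Lemma~\ref{116}(1) directly, whereas you merge them into the single kernel $K(x,y,t)$ and re-run the contradiction argument by hand.
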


\begin{proof}
 We only consider the case where $i=1$ because the other cases can be dealt with similarly.
In view of Definition \ref{30}, { there is $M_0>0$ such that}
	$$0\leq u_i(x, t) { \le  M_0} \quad\mbox{for}\quad \quad (x, t)\in \mathbb{R}^N\times [0, \infty), \,\, i=1,2.$$

Let $W=e^{ct}(u_1-u_2)$.   Then
	\begin{align}\label{4}
	W _t & = ce^{ct}(u_1-u_2)+e^{ct}( u_{1t}-u_{2t}) \nonumber \\
	&\geq \int_{\mathbb{R}^N}J(y-x)W(y, t)dy+\left [c+a_1(x, t)-1-b_1(x, t)(u_1+u_2) \right.\nonumber\\
	&\qquad \left.  -c_1(x, t)G_1*u_1\right]W-c_1(x, t)u_2G_1*W.
	\end{align}
and
	\begin{align}\label{new-add-eq2}
	W _t & = ce^{ct}(u_1-u_2)+e^{ct}( u_{1t}-u_{2t}) \nonumber \\
	&\geq \int_{\mathbb{R}^N}J(y-x)W(y, t)dy+\left [c+a_1(x, t)-1-b_1(x, t)(u_1+u_2) \right.\nonumber\\
	&\qquad \left.  -c_1(x, t)G_1*u_2\right]W-c_1(x, t)u_1G_1*W.
	\end{align}
We can choose $c$ large enough such that for any $(x, t)\in \mathbb{R}^N\times [0, \infty)$,	
	\begin{equation*}
	c+a_1(x, t)-1-b_1(x, t)(u_1+u_2)-c_1(x, t)G_1*u_1\geq a_{1L}>0.
	\end{equation*}
and 	
	\begin{equation*}
	c+a_1(x, t)-1-b_1(x, t)(u_1+u_2)-c_1(x, t)G_1*u_2\geq a_{1L}>0.
	\end{equation*}
Then it follows from the first part of Lemma \ref{116} $(1)$ and assumption ($\textbf{A}_3$) that $u_2(x, t)\leq u_1(x, t)$ for $(x, t)\in \mathbb{R}^N\times (0, \infty)$.

\end{proof}

\begin{corollary}\label{32}
 Assume that assumption ($\textbf{A}_3$) holds. For $i=1,2,3$, let $u_1\leq C_{i0}$ and $u_2\leq C_{i0}$ be  sup- and sub- solutions of $(\mathbf{P}_i)$ 
 on ${ \Omega_i}\times [0, \infty)$, respectively, and $u_1(\cdot, t),\ u_2(\cdot, t)\in \hat{X}_i$ for all $t\geq0$. Then for any $u_0\in \hat{X}_i$ satisfying
	$$u_1(x, 0)\geq u_0(x)\geq u_2(x, 0),$$
	equation $(\mathbf{P}_i)$ admits a solution $u_i(x, t; u_0)$ on ${ \Omega_i}\times [0,\infty)$ which satisfies
	$$u_2(x, t)\leq u_i(x, t; u_0)\leq u_1(x, t)\quad \mbox{for} \quad (x, t)\in { \Omega_i}\times [0,\infty).$$
\end{corollary}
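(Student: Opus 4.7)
The plan is to deduce Corollary \ref{32} directly from the global existence statement Theorem \ref{98}(1) combined with the comparison principle Lemma \ref{31}, observing that any classical solution of $(\mathbf{P}_i)$ is simultaneously a sup-solution and a sub-solution in the sense of Definition \ref{30}. Since $u_2(x,0) \le u_0(x) \le u_1(x,0)$ and $u_2(\cdot,0) \ge 0$, the initial datum $u_0$ lies in $\hat{X}_i^+$, so Theorem \ref{98}(1) produces a global nonnegative solution $u_i(\cdot,t;u_0) \in \hat{X}_i^+$ with $\partial_t u_i$ continuous on $\Omega_i \times [0,\infty)$.

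First I would establish the upper bound $u_i(x,t;u_0) \le u_1(x,t)$. Viewing $u_i$ as a sub-solution of $(\mathbf{P}_i)$ and $u_1$ as the given sup-solution with $u_1(x,0) \ge u_0(x) = u_i(x,0;u_0)$, I would invoke Lemma \ref{31}; the ``or'' hypothesis in Lemma \ref{31} is satisfied because $u_1 \le C_{i0}$ by assumption, so the lemma yields $u_i(x,t;u_0) \le u_1(x,t)$ on $\Omega_i \times (0,\infty)$.

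Symmetrically, for the lower bound $u_i(x,t;u_0) \ge u_2(x,t)$, I would regard $u_i$ as a sup-solution and apply Lemma \ref{31} to the pair $(u_i, u_2)$ with $u_i(x,0;u_0) = u_0(x) \ge u_2(x,0)$; this time the bound $u_2 \le C_{i0}$ (together with $u_2 \ge 0$ and the continuity/$\hat{X}_i$-valuedness assumed on $u_2$) supplies the required hypothesis of Lemma \ref{31}, giving $u_2(x,t) \le u_i(x,t;u_0)$ throughout $\Omega_i \times (0,\infty)$.

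Concatenating these two inequalities yields the desired sandwich $u_2(x,t) \le u_i(x,t;u_0) \le u_1(x,t)$ on $\Omega_i \times [0,\infty)$, the boundary value $t=0$ being included by the initial hypothesis on $u_0$. The only subtlety I anticipate is bookkeeping: one must check that a solution produced by Theorem \ref{98}(1) genuinely qualifies as both a sup- and a sub-solution in the sense of Definition \ref{30} (which is immediate since it satisfies the equation with equality and has continuous time derivative by the semigroup construction), and that the ``or'' hypothesis of Lemma \ref{31} is verified in each of the two comparisons by invoking the assumed bound $\max\{u_1, u_2\} \le C_{i0}$ on the appropriate member of the pair. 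No additional estimates are needed beyond what Theorem \ref{98}(1) and Lemma \ref{31} already provide.
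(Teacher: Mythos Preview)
Your proposal is correct and takes essentially the same approach as the paper, which simply records ``It follows from Lemma \ref{31} directly.'' You have unpacked this one-line proof exactly as intended: obtain the global nonnegative solution from Theorem \ref{98}(1), then apply Lemma \ref{31} twice---once to the pair $(u_1,u_i)$ using $u_1\le C_{i0}$ and once to $(u_i,u_2)$ using $u_2\le C_{i0}$---to sandwich $u_i$ between $u_2$ and $u_1$.
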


\begin{proof}
{ It follows from Lemma \ref{31} directly.}
\end{proof}


Furthermore,  we have the following comparison principle for $(\mathbf{P}_i)$.
\begin{corollary}\label{16-cor}
Assume that assumption ($\textbf{A}_3$)  holds. For $i=1,2,3$,	let $u_1(x, t)$ and $u_2(x, t)$ be solutions of $(\mathbf{P}_i)$ with initial value $u_{10}\in \hat{X}_i$ and $u_{20}\in \hat{X}_i$, respectively. If the initial values  $u_{20}$, $ u_{10}$ satisfy
$$u_{20}\not \equiv u_{10}, \quad   0\leq u_{20}\leq u_{10}$$
and
$$ u_{10} \leq \frac{a_{iM}}{b_{iL}}\quad \text{or} \quad  u_{20} \leq \frac{a_{iM}}{b_{iL}}.$$
	then	
$u_2(x, t)<u_1(x, t)$ for $(x, t)\in { \Omega_i}\times (0, \infty)$.
\end{corollary}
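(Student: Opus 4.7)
The proof combines the boundedness from Theorem \ref{98}$(1)$, the comparison result of Lemma \ref{31}, and the strict positivity part of Lemma \ref{116}$(1)$. By the symmetric roles in the computation below, I treat the case $u_{10}\le a_{iM}/b_{iL}$ (the other case is analogous after exchanging the roles of $u_1,u_2$ in the splitting of the convolution difference). Since $(\textbf{A}_3)$ gives $C_{i0}>a_{iM}/b_{iL}$, I fix
$$
M_i\in\bigl(a_{iM}/b_{iL},\, C_{i0}\bigr).
$$
Applying Theorem \ref{98}$(1)$ then yields $\|u_1(\cdot,t)\|_{\hat X_i}\le M_i<C_{i0}$ for all $t\ge 0$.

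Since $u_1$ and $u_2$ are solutions of $(\mathbf{P}_i)$, they are simultaneously sup- and sub-solutions in the sense of Definition \ref{30}. Using $u_{10}\ge u_{20}$ and the bound $u_1\le M_i<C_{i0}$, Lemma \ref{31} gives the non-strict inequality
$$
u_1(x,t)\ge u_2(x,t) \qquad \forall\,(x,t)\in \Omega_i\times [0,\infty).
$$

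To upgrade to strict inequality, I set $W(x,t)=e^{ct}(u_1(x,t)-u_2(x,t))$ for a constant $c>0$ to be chosen. The identity
$$
u_1\, G_i* u_1-u_2\, G_i* u_2 = u_1\,G_i*(u_1-u_2)+(u_1-u_2)\,G_i* u_2
$$
combined with the fact that $u_1,u_2$ solve $(\mathbf{P}_i)$ shows, after a direct computation, that $W$ satisfies the \emph{linear} equation
$$
W_t=\mathcal{L}_i[W]+h_1(x,t)\,W+h_2(x,t)\,G_i*W,
$$
with $h_1=c+a_i-b_i(u_1+u_2)-c_i\,G_i*u_2$ and $h_2=-c_i\,u_1$. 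Choosing $c$ large enough makes $h_1$ strictly positive and bounded on $\Omega_i\times[0,\infty)$. The bound from the first step gives $h_{2m}\ge -c_{iM}M_i$, hence
$$
J_m+G_M\,h_{2m}\ge J_m-c_{iM}M_i\,G_M>0,
$$
because $M_i<C_{i0}$ and $C_{i0}$ was defined so that $J_m\ge c_{iM}C_{i0}G_M$. Since $W(\cdot,0)=u_{10}-u_{20}\ge 0$ and $W(\cdot,0)\not\equiv 0$, the second part of Lemma \ref{116}$(1)$ applied to the equation for $W$ yields $W(x,t)>0$ on $\Omega_i\times(0,\infty)$. Dividing through by $e^{ct}$ gives $u_2(x,t)<u_1(x,t)$ as required.

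The delicate point is the choice of $M_i$ strictly between $a_{iM}/b_{iL}$ and $C_{i0}$: this is possible only because $(\textbf{A}_3)$ provides a \emph{strict} inequality, which in turn produces the strict sign $J_m+G_M h_{2m}>0$ needed by Lemma \ref{116}$(1)$. Without this margin the argument would only reproduce the non-strict conclusion of Lemma \ref{31}.
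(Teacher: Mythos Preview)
Your argument follows the same route as the paper's: bound one of the solutions below $C_{i0}$, apply Lemma \ref{31} for the non-strict inequality, then derive the linear equation for $W=e^{ct}(u_1-u_2)$ and invoke the strict-positivity part of Lemma \ref{116}(1). There is, however, one slip in the final step. You write that ``$C_{i0}$ was defined so that $J_m\ge c_{iM}C_{i0}G_M$,'' but $C_{i0}$ is defined by the \emph{pointwise} condition $J(x)\ge c_{iM}C_{i0}G(x)$ on $B_{r_1}$, which in general only yields $C_{i0}\ge J_m/(c_{iM}G_M)$, not the reverse inequality you use. Thus $M_i<C_{i0}$ alone does not guarantee $J_m-c_{iM}M_iG_M>0$.

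The fix is immediate: choose instead $M_i\in\bigl(a_{iM}/b_{iL},\,J_m/(c_{iM}G_M)\bigr)$, an interval that is nonempty precisely by $(\textbf{A}_3)$; then $J_m+G_Mh_{2m}\ge J_m-c_{iM}M_iG_M>0$ and Lemma \ref{116}(1) applies. The paper sidesteps this entirely by noting that $a_{iM}/b_{iL}$ is a constant super-solution and $0$ a sub-solution of $(\mathbf{P}_i)$, so Corollary \ref{32} gives $0\le u_2\le a_{iM}/b_{iL}$ exactly (no slack needed); taking $h_2=-c_iu_2$ then yields $h_{2m}\ge -c_{iM}a_{iM}/b_{iL}$, and condition \eqref{aa} is precisely $(\textbf{A}_3)$.
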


\begin{proof}
 Note that $u(x,t)\equiv \frac{a_{iM}}{b_{iL}}$ is a super-solution of ($\textbf{P}_i$) and $u\equiv 0$ is a sub-solution of ($\textbf{P}_i$), and that $\frac{a_{iM}}{b_{iL}}< C_{i0}$. Then by Corollary \ref{32}, we have
$$
0\le u_2(x,t)\le \frac{a_{iM}}{b_{iL}}.
$$
By Lemma \ref{32}, we have
$$
u_2(x,t)\le u_1(x,t).
$$
 Let
  $W_1=e^{ct}(u_1-u_2)$, then
  \begin{align*}
  W_{1t}&=\int_{\mathbb{R}^N}J(y-x)W_1(y, t)dy+\left [c+a_1(x, t)-1-b_1(x, t)(u_1+u_2)\right. \nonumber \\
 	&\qquad \left. -c_1(x, t)G_1*u_1\right]W_1-c_1(x, t)u_2G_1*W_1.
  \end{align*}
  and
    \begin{align*}
  W_{1t}&=\int_{\mathbb{R}^N}J(y-x)W_1(y, t)dy+\left [c+a_1(x, t)-1-b_1(x, t)(u_1+u_2)\right. \nonumber \\
 	&\qquad \left. -c_1(x, t)G_1*u_2\right]W_1-c_1(x, t)u_1G_1*W_1.
  \end{align*}
  We can choose $c$ large enough such that for any $(x, t)\in \mathbb{R}^N\times [0, \infty)$,	
	\begin{equation*}
	c+a_1(x, t)-1-b_1(x, t)(u_1+u_2)-c_1(x, t)G_1*u_1\geq a_{1L}>0.
	\end{equation*}
and 	
	\begin{equation*}
	c+a_1(x, t)-1-b_1(x, t)(u_1+u_2)-c_1(x, t)G_1*u_2\geq a_{1L}>0.
	\end{equation*}
Then it follows from the second part of Lemma \ref{116} $(1)$ and assumption ($\textbf{A}_3$) that	
$u_2(x, t)<u_1(x, t)$ for $(x, t)\in { \Omega_i}\times (0, \infty)$
\end{proof}
We now prove Theorem \ref{104} (1).
\begin{proof}[Proof of Theorem \ref{104} (1)]

 We only consider the case where $i=1$ because other cases can be dealt with similarly. The proof can be divided into the following four steps.	

\textbf{Step 1}. By the assumption ($\textbf{A}_3$) and Definition \ref{30}, we  know that $\bar{u}=C_{10}$ and $\underline{u}=\epsilon_0$ are  sup-solution and sub-solution of $(\mathbf{P}_1)$, respectively, where $\epsilon_0>0$ is a small enough constant. Recall that  $u_1(x, t; u_0)$ is the solution of $(\mathbf{P}_1)$ with initial value $u_0$,
For  any $(x, t)\in R^N \times [0,\infty)$, we denote
$$\bar{u}_n(x, t)=u_1(x, t+nT;\bar{u}), \quad \underline{u}_n(x,t)=u_1(x, t+nT;\underline{u}).$$
By{ Corollary \ref{32}}, we know that 
$\{\bar{u}_n\}_{n=1}^{\infty} $ is a monotone decreasing sequence  and $\{\underline{u}_n\}_{n=1}^{\infty} $ is a monotone increasing sequence with respect to $n$; meanwhile, $\bar{u}_n \geq \underline{u}$ and $\underline{u}_n \leq \bar{u}$ for all $n$. Thus, we can define the following two functions:
$$u^+(x, t)=\lim_{n\rightarrow + \infty} u_1(x, t+nT; \bar{u}),\quad u^-(x, t)=\lim_{n\rightarrow \infty} u_1(x, t+nT; \underline{u}).$$
Then $u^+(x, t)\geq u^-(x, t)$,  and $u^+(x,t)$ and $u^-(x,t)$ are periodic in both $x$ and $t$.

 \textbf{Step 2}. In this step, we will claim that for any $x\in \R^N$, $u^+(x, t)$ and $u^-(x, t)$ are uniformly continuous with respect to $t$ and that for any $t\geq 0$, $u^+(x, t)$ and $u^-(x, t)$ are uniformly continuous with respect to $x$. In what follows, we only discuss $u^+(x, t)$, and the case for {$u^-(x, t)$} can be analyzed similarly.

 Notice that
{ \begin{align*}
\bar{u}_n(x, t)&=\bar{u}_n(x, 0)+\int_0^{t} \big{[}\mathcal{L}_1[\bar{u}_n(x, s)]+
\bar{u}_n(x, s)\left (a_1(x, s)\right.\\
&\qquad \left.-b_1(x, s)\bar{u}_n(x, s)-c_1(x, s)G_1*\bar{u}_n(x, s)\right) \big{]}ds.
\end{align*}}
Letting $n\rightarrow \infty$ and by dominated convergence theorem, we obtain
\begin{align*}
u^+(x, t)&=u^+(x, 0)+\int_0^{t} \big{[}\mathcal{L}_1[u^+(x, s)]+u^+(x, s) \left (a_1(x, s)\right. \\
&\qquad \left.-b_1(x, s)u^+(x, s)-c_1(x, s)G_1*u^+(x, s)\right) \big{]}ds.	
\end{align*}
{ By the arguments similar to those in the proof of the continuity and differentiability of $\underline{u}_1(x,t)$ and $\overline{u}_1(x,t)$ in $t$ in
Theorem \ref{98} $(2)$, we have that $u^+(x, t)$ is continuous as well as differentiable in $t$, and}
\begin{equation}\label{33}
u^+_t=J \ast u^+-u^++u^+[a_1(x,t)-b_1(x,t)u^+-c_1(x,t)G_1\ast u^+]
\end{equation}
for $t\in \R$.

For each fixed $x$, set
$$a_x(t)=a_1(x, t), \ b_x(t)=b_1(x, t),\ c_x(t)=c_1(x, t),\ u^+_x(t)=u^+(x, t),$$
and consider the following
auxiliary equation
  \begin{equation}\label{34}
u_t=u[a_x(t)-1-b_x(t)u-c_x(t)(G_1\ast u^+)(x, t)]+\int_{\mathbb{R}^N}J(y-x)u^+(y, t)dy,
\end{equation}
{Obviously, $u^+_x$ is a positive periodic solution of (\ref{34}). }
We claim that (\ref{34}) has exactly one positive time periodic solution. Indeed, If not, then (\ref{34}) has two positive time periodic solutions $\hat{u}_1$ and $\hat{u}_2$ satisfying $\hat{u}_1(t)>\hat{u}_2(t)$ for $t\in [0,\ T]$. Let $\hat{u}=\hat{u}_1-\hat{u}_2$, then
	\begin{align*}
	\hat{u}_t & =\hat{u}_{1t}-\hat{u}_{2t} \\
	&= [a_x(t)-1-b_x(t)\hat{u}_1-c_x(t)G_1\ast u^+]\hat{u}_1+\int_{\mathbb{R}^N}J(y-x)u^+(y, t)dy \\
	&\quad
	-[a_x(t)-1-b_x(t)\hat{u}_2-c_x(t)G_1\ast u^+]\hat{u}_2-\int_{\mathbb{R}^N}J(y-x)u^+(y, t)dy
	\\
	&=\hat{u}[a_x(t)-1-b_x(t)(\hat{u}_1+\hat{u}_2)-c_x(t)G_1\ast u^+]\\
	&<\hat{u}[a_x(t)-1-b_x(t)\hat{u}_1-c_x(t)G_1\ast u^+].
	\end{align*}
Thus, we have
\begin{align*}
\hat{u}(T)&<\hat{u}(0)\exp\big{(}\int_0^T (a_x(s)-1-b_x(s)\hat{u}_1(s)-c_x(s)(G_1\ast u^+)(x, s))ds\big{)} \\
&=\hat{u}(0)\exp\big{(}\int_0^T (\frac{\hat{u}_{1t}-\int_{\mathbb{R}^N}J(y-x)u^+(y, s)dy}{\hat{u}_1})ds\big{)}\\
&<\hat{u}(0)\exp(\int_0^T \frac{\hat{u}_{1t}}{\hat{u}_1} ds)= \hat{u}(0),
\end{align*}
which is a contradiction. Therefore, we have that for any $x$, $u_x^+$ is the unique positive periodic solution of (\ref{34}).

 Given a sequence $\{x_n\}$ and $x$ satisfying $x_n\rightarrow x$ as $n$ goes to $\infty$, we denote $u^+_n(t)=u^+(x_n, t)$. It is easy to see that $u^+_n(t)$ is uniformly bounded and equi-continuous on $[0, T]$. By Arzela Ascoli theorem, there exists a subsequence, still denoted by $\{u^+_n\}$, which converges uniformly to a certain periodic function $u^*$. Recall that
\begin{equation}\label{36}
u^+_{nt}=u^+_n[a_{x_n}(t)-1-b_{x_n}(t)u^+_n-c_{x_n}(t)(G\ast u^+)(x_n, t)]+\int_{\mathbb{R}^N}J(y-x_n)u^+(y, t)dy.
\end{equation}
Letting $\ n\rightarrow \infty$, we get
$$u^+_{nt}\rightarrow u^*[a_x(t)-1-b_x(t)u^*-c_x(t)(G\ast u^+)(x, t)]+\int_{\mathbb{R}^N}J(y-x)u^+(y, t)dy$$
uniformly on $[0, T]$. This implies that $u^*$ is differentiable in $t$ and
$$u^*_{t}=u^*[a_x(t)-1-b_x(t)u^*-c_x(t)(G\ast u^+)(x, t)]+\int_{\mathbb{R}^N}J(y-x)u^+(y, t)dy.$$
Thus, we get $u^*(t)=u^+(x, t)$, which implies the continuity of $u^+(x, t)$ in $x$.

 \textbf{Step 3}. Consider  the following two equations
\begin{equation}\label{37}
u_t=\int_{\mathbb{R}^N}J(y-x)u(y, t)dy-u+up^+(x, t)
\end{equation}
and
\begin{equation}\label{38}
u_t=\int_{\mathbb{R}^N}J(y-x)u(y, t)dy-u+up^-(x, t),
\end{equation}
where
 \begin{align*}
   &p^+(x, t)=a_1(x, t)-b_1(x, t)u^+-c_1(x, t)G_1\ast u^+,\\
   & p^-(x, t)=a_1(x, t)-b_1(x, t)u^--c_1(x, t)G_1\ast u^-.
 \end{align*}
 If $u^+\not \equiv u^-$, by Corollary \ref{16-cor} and the periodicity of $u^+$ and $u^-$, we get $u^+(x, t)>u^-(x, t)$ for $(x, t)\in \mathbb{R}^N \times [0, \ \infty)$. Thus, $p^+<p^-$.
We can find $c^*>0$ such that $p^+\leq p^--c^*$. Note that { for any $c_*>0$},
\begin{align*}
	(c_*u^-e^{-c^*t})_t & =c_*u^-_te^{-c^*t}-c_*c^*u^-e^{-c^*t} \\
	&= \int_{\mathbb{R}^N}J(y-x)(c_*u^-e^{-c^*t})(y, t)dy-c_*u^-e^{-c^*t}+c_*u^-e^{-c^*t}(p^--c^*)
	\\
	&\geq \int_{\mathbb{R}^N}J(y-x)(c_*u^-e^{-c^*t})(y, t)dy-c_*u^-e^{-c^*t}+c_*u^-e^{-c^*t}p^+.
	\end{align*}
{Hence, $c_*u^-e^{-c^*t}$ is a sup-solution of (\ref{37})}. Since $\underline{u}\leq u^+ \leq \bar{u}$, we can choose $c_*$ such that $c_*u^-(x, 0)\geq u^+(x, 0)$. Then, by Proposition \ref{100}, we have $u^+(x, t)\leq c_*e^{-c^*t}u^-(x, t)$ for $(x, t)\in \mathbb{R}^N \times [0, \ \infty)$,  which is a contradiction by the boundedness and periodicity of $u^-$ and $u^+$.
Therefore, $u^+\equiv u^-=:u_P$. From the above discussion and Corollary \ref{32}, for any initial condition $u_0$ satisfying $\inf \limits_{x\in \mathbb{R}^N} u_0(x)>0$ and $0<u_0\leq \bar{u}$, we have
\begin{equation}\label{39}
\|u_1(\cdot, t; u_0)-u_P(\cdot, t)\|_{\hat{X}_1}\rightarrow 0, \quad  t\rightarrow \infty.
\end{equation}

For any $u_0$ satisfying $\inf \limits_{x\in \mathbb{R}^N} u_0(x)>0$, we know that $u_1(x, t; u_0)\leq \hat{u}_1(x,t;u_0)$, where $\hat{u}_1(x,t;u_0)$ is defined in Section \ref{102}. On the other hand, $$\limsup\limits_{t\rightarrow \infty}\hat{u}_1(x,t;u_0) \leq \frac{a_{1M}}{b_{1L}}.$$
Then for any $\varepsilon>0$ small enough, we can find $T_0$ such that $$u_1(x, T_0; u_0)\leq\sup\limits_{x\in D_1}\hat{u}_1(x,T_0;u_0)\leq \frac{a_{1M}}{b_{1L}}+\varepsilon\leq C_{10},$$
which implies that (\ref{39}) also holds.
\end{proof}

\subsection{Proof of Theorem \ref{104} (2) }

In this subsection, we prove Theorem \ref{104} (2).

\begin{proof} [Proof of Theorem \ref{104} (2)]
We only consider the case $i=1$ and the other cases can be dealt with similarly.
 By Theorem \ref{73},
 we can obtain two positive time periodic functions $\underline{U}$ and $\overline{U}$ which satisfy $\underline{U}\leq \overline{U}$ and
 \begin{equation*}
 \left\{
 \begin{aligned}
 & \underline{U}_{t}=\mathcal{L}_1[\underline{U}]+\underline{U}[a_1(x, t)-b_1(x, t)\underline{U}-c_1(x, t)G_1*\overline{U}],\\
 &\overline{U}_t=\mathcal{L}_1[\overline{U}]+\overline{U}[a_1(x, t)-b_1(x, t)\overline{U}-c_1(x, t)G_1*\underline{U}].
 \end{aligned}
 \right.
 \end{equation*}
 Moreover, by Theorem \ref{73}, if $u_0\in X^+_1\setminus \{0\}$, then 
$\overline{U}$, $ \underline{U}\in X^{++}_1$
and
 for any small enough $\epsilon>0$, there is $t_{\epsilon,u_0}>0$ such that 
 $$
 \underline{U}(x,t)-\epsilon\le u_1(t,x;u_0)\le \bar U(x,t)+\epsilon\quad \forall\,\, t\ge t_{\epsilon,u_0},\,\, x\in \R^N.
 $$
 It then suffices to prove that $\underline{U}(x,t)\equiv \overline{U}(x,t)$.

To do so, let $W=\overline{U}-\underline{U}$, then
\begin{align}\label{78}
	W_t & =\overline{U}_{t}-\underline{U}_{t} \nonumber \\
	&=\mathcal{L}_1[\overline{U}]+a_1(x, t)\overline{U}-b_1(x, t)\overline{U}^2-c_1(x, t)\overline{U}G_1*\underline{U} \nonumber \\
	&\quad
	-\mathcal{L}_1[\underline{U}]-a_1(x, t)\underline{U}+b_1(x, t)\underline{U}^2+c_1(x, t)\underline{U}G_1*\overline{U}
\nonumber	\\
	&=\mathcal{L}_1[W]+h_1(x, t)W+h_2(x, t)G_1*W,
	\end{align}
where
\begin{align*}
&h_1(x, t)=a_1(x, t)-b_1(x, t)(\overline{U}+\underline{U})-c_1(x, t)G_1*\underline{U},\\
&h_2(x, t)=c_1(x, t)\underline{U}.
\end{align*}
Multiplying (\ref{78}) by $W$ and integrating it over the space periodic domain $D_1$, we have 
\begin{align}\label{79}
\frac{1}{2}\frac{d}{dt}\int_{D_1} W^2 dx =&\int_{D_1} h_1(x, t)W^2dx-\int_{D_1} W^2 dx \nonumber \\
&+\int_{D_1} W \int_{\mathbb{R}^N}J(y-x)W(y, t)dy dx +\int_{D_1}
h_2(x, t) W G_1*Wdx.
\end{align}

Next we  claim that
\begin{equation}\label{86}
\int_{D_1} \varphi(x)\int_{\mathbb{R}^N}J(y-x)\varphi(y, t)dydx-\int_{D_1} \varphi(x)^2 dx\leq 0
\end{equation}
for all $\varphi\in X_1$.
For $z\in \mathbb{R}^N$, let
$$\hat{J}_1(z)=\sum \limits_{i_1, i_2, \cdots, i_N \in \mathbb{N}}J(z+(i_1p_1, i_2p_2,\cdots, i_Np_N)).$$
Then by the assumption that $J(\cdot)$ is symmetric with respect to $0$, $\hat{J}_1(\cdot)$ is also symmetric with respect to $0$, and
$$
\int_{\mathbb{R}^N}J(y-x)\varphi(y)dy=\int_{D_1}\hat{J}_1(y-x)\varphi(y)dy.$$
Meanwhile,
$$\int_{\mathbb{R}^N}J(z)dz=\int_{D_1} \hat{J}_1(z)dz=1.$$
Hence
\begin{align*}
&\int_{D_1}\int_{D_1} \hat{J}(y-x)\varphi(y)\varphi(x)dydx-\int_{D_1} \varphi^2(x)dx \\
 &= \int_{D_1}\int_{D_1} \hat{J}(y-x)\varphi(y)\varphi(x)dydx-\int_{D_1}\int_{D_1}\hat{J}(y-x) \varphi^2(x)dydx \\
 &=\int_{D_1}\int_{D_1} \hat{J}(y-x) \varphi(x)(\varphi(y)-\varphi(x))dydx \\
 &= \frac{1}{2}\int\int_{D_1\times D_1}\hat{J}(y-x) \varphi(x)(\varphi(y)-\varphi(x))dydx+\frac{1}{2}\int\int_{D_1\times D_1}\hat{J}(y-x) \varphi(x)(\varphi(y)-\varphi(x))dydx\\
 &=\frac{1}{2}\int\int_{D_1\times D_1}\hat{J}(y-x) \varphi(x)(\varphi(y)-\varphi(x))dydx+\frac{1}{2}\int\int_{D_1\times D_1}\hat{J}(y-x) \varphi(y)(\varphi(x)-\varphi(y))dydx \\
 &=-\frac{1}{2}\int\int_{D_1\times D_1} \hat{J}(y-x)(\varphi(y)-\varphi(x))^2dydx \\
 &\leq 0,
\end{align*}
in which we have used the symmetric properties of $\hat{J}(\cdot)$. Then (\ref{86}) holds and hence
$$
-\int_{D_1} W^2(\cdot, t) dx+\int_{D_1} W(\cdot, t)\int_{\mathbb{R}^N}J(y-x)W(y, \cdot)dydx\leq 0
$$
 for all $t\geq 0$.
Similarly, we have
$$\int_{D_1}  W(\cdot, t) G_1*W(\cdot, t)dx\leq \int_{D_1} W^2(\cdot, t) dx.$$
This, together with (\ref{79}), implies that
$$ \frac{1}{2}\frac{d}{dt}\int_{D_1} W^2 dx
 \leq \int_{D_1} (h_1(x, t)+h_2(x,t))W^2dx.$$

Now,  by $(\textbf{A}_4)$,
$$
\sup_{x\in D_1,t\in\R} h_1(x,t)+h_2(x,t)<0.
$$
Hence there is $\alpha>0$ such that
$$ \frac{1}{2}\frac{d}{dt}\int_{D_1} W^2 dx
 \leq -\alpha \int_{D_1} W^2dx.$$
This implies that
 $$\int_{D_1} W^2 dx\rightarrow 0, \quad t\rightarrow \infty.$$
Note that $W$ is a time period continuous function, then $\underline{U}\equiv \overline{U}:=U^*$, which is a positive time periodic solution of $(\mathbf{P}_1)$.
\end{proof}

\subsection{Proof of Theorem \ref{104} (3) }

In this subsection, we prove Theorem \ref{104} (3).

\begin{proof} [Proof of Theorem \ref{104} (3)]
 In this case,  according to \cite{hale1996}, we know that the equation $$u_t=u[a_1(t)-(b_1(t)+c_1(t)) u]$$ has exactly one positive $T$-period solution $\phi^*(t)$ and for any $\varepsilon>0$, $K>0$, there exists $t_{\varepsilon, K}$ such that for $t\geq t_{\varepsilon, K}$,
            $$\hat{u}(t; K)-\varepsilon \leq \phi^*(t)\leq \hat{u}(t; K)+\varepsilon,$$
            where $\hat{u}(t; K)$ is the solution of
            \begin{equation*}
    \left\{
    \begin{aligned}
   & u_t=u[a_1(t)-(b_1(t)+c_1(t))u],\\
   &u(0)=K>0.
    \end{aligned}
    \right.
    \end{equation*}
    Obviously, $\phi^*(t)$  is also a positive time period solution of $(\mathbf{P}_1)$.
             For any $u_0\in \hat{X}_i$ with $\inf \limits_{x\in \mathbb{R}^N} u_0(x)>0$, denote $\bar{u}(0)=\sup \limits_{x \in \mathbb{R}^N} u_0(x)$ and $\underline{v}(0)=\inf \limits_{x \in \mathbb{R}^N} u_0(x)$ and
let $(\bar{u}(t),\ \underline{v}(t))=(\bar{u}(t; \bar{u}(0), \underline{v}(0))$, $\underline{v}(t; \bar{u}(0), \underline{v}(0)))$ be the solution of
\begin{equation*}
 \left\{
 \begin{aligned}
 & u_t=u[a_1(t)-b_1(t)u-c_1(t)v],\\
 & v_t=v[a_1(t)-b_1(t)v-c_1(t)u].
 \end{aligned}
 \right.
 \end{equation*}
 with initial condition
 $$(\bar{u}(0; \bar{u}(0), \underline{v}(0)), \ \underline{v}(0; \bar{u}(0), \underline{v}(0)))=(\bar{u}(0),\  \underline{v}(0)),$$
 then by Lemma \ref{21}, we have $\bar{u}(t)\geq \underline{v}(t)$ and
 \begin{equation}\label{80}
 \underline{v}(t)\leq u_1(x, t; u_0)\leq \bar{u}(t).
 \end{equation}
  Thus, we have
 \begin{equation*}
 \left\{
 \begin{aligned}
 & \bar{u}_t=\bar{u}[a_1(t)-b_1(t)\bar{u}-c_1(t)\underline{v}]\geq \bar{u}[a_1(t)-b_1(t)\bar{u}-c_1(t)\bar{u}],\\
 & \underline{v}_t=\underline{v}[a_1(t)-b_1(t)\underline{v}-c_1(t)\bar{u}]\leq \underline{v}[a_1(t)-b_1(t)\underline{v}-c_1(t)\underline{v}],
 \end{aligned}
 \right.
 \end{equation*}
 which means $\bar{u}(t)\geq \hat{u}(t; \bar{u}(0))$ and $\underline{v}(t)\leq \hat{u}(t; \underline{v}(0))$. Then for any $\varepsilon>0$, $\bar{u}(0)>0$ and $\underline{v}(0)>0$, there exists $t_{\varepsilon, \bar{u}(0), \underline{v}(0)}=\max \{t_{\varepsilon, \bar{u}(0)}, \ t_{\varepsilon, \underline{v}(0)}\}$ such that for $t\geq t_{\varepsilon, \bar{u}(0), \underline{v}(0)}$,
\begin{equation}\label{81}
\underline{v}(t)-\varepsilon\leq \hat{u}(t; \underline{v}(0))-\varepsilon \leq \phi^*(t)\leq \hat{u}(t; \bar{u}(0))+\varepsilon\leq \bar{u}(t)+\varepsilon.
\end{equation}

Now we claim that $0\leq\ln\frac{\bar{u}(t)}{\underline{v}(t)}\rightarrow 0$ as $t$ goes to $\infty$. Note that
\begin{equation*}
\frac{d}{dt}\ln{\frac{\bar{u}(t)}{\underline{v}(t)}}=\frac{\bar{u}'(t)}{\bar{u}(t)}-\frac{\underline{v}'(t)}{\underline{v}(t)}= -(b_1(t)-c_1(t))(\bar{u}-\underline{v})\leq -\inf \limits_{t\in[0, T]}\{b_1(t)-c_1(t)\}(\bar{u}-\underline{v}),
\end{equation*}
and $b_{1L}>c_{1M}$, we know that $\inf \limits_{t\in[0, T]}\{b_1(t)-c_1(t)\} \geq b_{1L}-c_{1M}>0$.
Note that $-(a-b)\leq -b\ln\frac{a}{b}$ for $a>b>0$, then we have
$$\frac{d}{dt}\ln{\frac{\bar{u}(t)}{\underline{v}(t)}}\leq -\inf \limits_{t\in[0,\ T]}\{b_1(t)-c_1(t)\}\underline{v}(t)\ln{\frac{\bar{u}(t)}{\underline{v}(t)}}.$$ Then
$\frac{d}{dt}\ln{\frac{\bar{u}(t)}{\underline{v}(t)}}\leq -K_0\ln{\frac{\bar{u}(t)}{\underline{v}(t)}}$, where
$$K_0=\inf \limits_{t\in[0, T]}\{b_1(t)-c_1(t)\}\inf \limits_{t\geq 0} \underline{v}(t)>0.$$
Thus,
\begin{equation}\label{82}
0\leq \ln{\frac{\bar{u}(t)}{\underline{v}(t)}}\leq \ln{\frac{\bar{u}(0)}{\underline{v}(0)}}\exp(-K_0t)\rightarrow 0 , \quad t\rightarrow \infty.
\end{equation}
It follows from (\ref{80})-(\ref{82}) that (\ref{83}) holds.
\end{proof}
\section*{Acknowledgments}
Jianping Gao would like to thank the China Scholarship Council of China (201706130064)
for financial support during the period of his overseas study and to express his gratitude to the Department of Mathematics and Statistics, Auburn University for its kind hospitality. Shangjiang Guo is partially supported by NSF of China (11671123).

\end{document}